\newtheorem{PROP}{Proposition}[section]
\newtheorem{THM}[PROP]{Theorem}
\newtheorem{LM}[PROP]{Lemma}
\newtheorem{COR}[PROP]{Corollary}
\theoremstyle{definition}
\newtheorem{DEF}[PROP]{Definition}
\newtheorem{EXA}[PROP]{Example}
\newtheorem{REM}[PROP]{Remark}
\newcommand{\Zobr}[3]{\ensuremath{#1\colon #2\to #3}}
\newcommand{\inv}[1]{#1^{-1}}
\newcommand{\Invobr}[2]{\inv{#1}(#2)}
\newcommand{\Obr}[2]{#1[#2]}
\newcommand{\mc}[1]{\mathcal{#1}}
\newcommand{\abs}[1]{|{#1}|}
\newcommand{\ol}[1]{\overline{#1}}
\newcommand{\Dots}[3]{\ensuremath{#1_{#2}\ldots#1_{#3}}}
\newcommand{\emps}{\emptyset}
\newcommand{\sm}{\setminus}
\newcommand{\RR}{\mathbb{R}}
\newcommand{\NN}{\mathbb{N}}
\newcommand{\Ra}{\Rightarrow}
\newcommand{\ve}{\varepsilon}
\newcommand{\vk}{\varkappa}
\newcommand{\omone}{\ensuremath{\omega_1}}
\newcommand{\uv}[1]{``#1''}
\newcommand{\enu}{\renewcommand{\theenumi}{\roman{enumi}}\renewcommand{\labelenumi}{\textrm{(\theenumi)}}}
\newcommand{\FF}{\mathcal{F}}
\newcommand{\FI}{\mathcal{F}(\I)}
\newcommand{\FJ}{\FF(\J)}
\newcommand{\FD}{\FF_D}
\newcommand{\APIJ}{\mathrm{AP}(\I,\J)}
\newcommand{\APIFin}{\mathrm{AP}(\I,\Fin)}
\newcommand{\APIdFin}{\mathrm{AP}(\Id,\Fin)}
\newcommand{\I}{\ensuremath{\mathcal{I}}}
\newcommand{\ID}{\ensuremath{\I_D}}
\newcommand{\J}{\mathcal{K}}
\newcommand{\Fin}{\mathrm{Fin}}
\newcommand{\Id}{\ensuremath{\mathcal{I}_d}}
\newcommand{\IhJ}{\IhJh{\I}{\J}}
\newcommand{\IhI}{\IhJh{\I}{\I}}
\newcommand{\IhJh}[2]{\ensuremath{#1^{#2}}}
\newcommand{\powerset}[1]{\mc P(#1)}
\begin{document}

\title{$\mathcal I^{\mathcal K}$-Cauchy functions}


\author{Pratulananda Das}
\address{Department of Mathematics, Jadavpur University, Kolkata-700032, West Bengal, India}
\email{\tt pratulananda@yahoo.co.in}

\author{Martin Sleziak}
\address{Department of Algebra, Geometry and Mathematical Education, Faculty of Mathematics, Physics and Informatics, Comenius University, Mlynsk\'a dolina, 842 48 Bratislava, Slovakia}
\email{\tt sleziak@fmph.uniba.sk}

\author{Vladim\'\i r Toma}
\address{Department of Mathematical Analysis and Numerical Mathematics, Faculty of Mathematics, Physics and Informatics, Comenius University, Mlynsk\'a dolina, 842 48 Bratislava, Slovakia}
\email{\tt toma@fmph.uniba.sk}

\date{\today}

\maketitle

\begin{abstract}
In this paper we introduce the notion of $\mathcal I^{\mathcal K}$-Cauchy function, where $\mathcal I$ and $\mathcal K$ are ideals on the same set.
The $\mathcal I^{\mathcal K}$-Cauchy functions are a generalization of $\mathcal I^*$-Cauchy sequences and $\mathcal I^*$-Cauchy nets. We show how this notion can be used to characterize complete uniform spaces and we study how $\mathcal I^{\mathcal K}$-Cauchy functions and $\mathcal I$-Cauchy functions are related. We also define and study $\mathcal I^{\mathcal K}$-divergence of functions.
\end{abstract}

\noindent Keywords: ideal convergence,  filter, Cauchy net, $\mathcal I$-Cauchy condition

\noindent Mathematical Reviews subject classification: Primary 54A20; Secondary 40A05.

\section{Introduction}

The main goal of this paper is to study the $\IhJ$-Cauchy condition, which is a common generalization of various types of $\I^*$-Cauchy condition. As the research of $\I^*$-Cauchy sequences and nets is tightly linked to two types of ideal convergence, let us start by briefly mentioning the history of them.

The notions of $\I$-convergence and $\I^*$-convergence, where $\I$ is an ideal on the set $\NN$, were introduced in \cite{KMS} and \cite{KSW}. These two notions give a rather natural generalization of the notion of statistical convergence. In fact, in the case that $\I=\I_d$ is the ideal consisting of sets having asymptotic density equal to zero, these two types of convergence are equivalent. In \cite{KSW} the authors characterized ideals on the set $\NN$ for which $\I$-convergence and $\I^*$-convergence of sequences in metric spaces are equivalent. Similar results for the first countable topological spaces were obtained in \cite{LAHDASTOP}.

These two types of convergence were studied in several other contexts, too. In \cite{DASKOSMAWI} the authors defined $\I$-convergence and $\I^*$-convergence of double sequences and they studied the question when these two types of convergence coincide.
The paper \cite{LAHDASNETS} was focused on the $\I$-convergence and $\I^*$-convergence of nets.
In \cite{GEZERKARAKUS} and \cite{BADEKO} the $\I$-convergence and $\I^*$-convergence of sequences of functions were examined.

Later the authors of the paper \cite{MACSLEIK} defined the $\IhJ$-convergence and they showed that this type of convergence is a common generalization for all types of $\I$- and $\I^*$-convergence we have mentioned so far. Also a characterization of the ideals $\I$ and $\J$ such that $\I$-convergence is equivalent to $\IhJ$-convergence was given. It is perhaps not very surprising that these results encompass, as special cases, the results obtained before for double sequences, nets and functional sequences.

When studying the convergence of sequences, several closely related notions occur quite naturally, such as cluster points, Cauchy sequences, limit superior, subsequences etc. Corresponding concepts appear when dealing with convergence of nets and filters. After defining some type of convergence, it is very natural to ask whether analogues of these concepts can be defined for this particular type of convergence and whether they have similar properties as for sequences, nets, filters. In this paper we will concentrate on the version of Cauchy condition, which corresponds to the $\IhJ$-convergence.

Again, special cases of the $\IhJ$-Cauchy condition have been studied by several authors in various contexts.
Fridy introduced the statistical Cauchy condition in \cite{FRIDY}. The statistically Cauchy sequences were also discussed in \cite{DIMAIOKOCINACSTAT}.
The $\I$-Cauchy sequences and $\I^*$-Cauchy sequences in metric spaces were studied in \cite{NABPEHGUR}.
In \cite{DASGHOSALCOMPLETE} the authors introduced the notion of $\I^*$-Cauchy nets. $\IhJ$-Cauchy nets were introduced in \cite{DASPALIKCAUCHYNETS} and some results similar to the results obtained in this paper were given there.
The $\I$-Cauchy condition for double sequences was given in \cite{DEMSICAUCH}, although $\I^*$-Cauchy condition was not studied there.

We will show that the notion of $\IhJ$-Cauchy function introduced in this paper is a common generalization of the above mentioned concepts. We will show generalizations of several results which were obtained for these special cases.

We start by defining the $\IhJ$-Cauchy function, the definition is followed by several relatively straightforward observations about $\IhJ$-Cauchy functions and $\IhJ$-convergent functions. Then we study the relation between this notion and completeness. It is relatively easy to show that
if $X$ is a complete space, then every $\IhJ$-Cauchy function $\Zobr fSX$ is also $\IhJ$-convergent. We also discuss when the converse implication is true.

The next topic is the study of the relationship between $\I$-Cauchy functions and $\IhJ$-Cauchy functions. We show that in metric spaces these two notions are equivalent if and only if the ideals fulfill the condition $\APIJ$, which was introduced in \cite{MACSLEIK}. We also include examples showing that this characterization is not true for arbitrary uniform spaces.

In the last section of this paper we define $\IhJ$-divergence as a generalization of $\I^*$-divergence, which was studied in \cite{DASGHOSAL}. We show, how we can obtain some results on $\IhJ$-divergence and $\I^*$-divergence from the results about $\IhJ$-convergence.

\section{Basic definitions and notation}

\begin{DEF}
An \emph{ideal} (of sets) is a nonempty system of sets $\I$ hereditary with respect to inclusion and additive i.e.
\begin{compactenum}
 \enu
	\item $A\subseteq B\in\I \Longrightarrow A\in\I$;
	\item $A, B \in \I \Longrightarrow  A\cup B \in \I$;
\end{compactenum}
\end{DEF}

If the elements of $\I$ are subsets of a given set $S$ we say that $\I$ is an \emph{ideal on the set} $S$.

If $\I\subsetneq \mathcal{P}(S)$ we say hat $\I$ is a \emph{proper ideal on S}.

If for every $x\in S$ the singleton $\{x\}\in\I$ we say that $\I$ is an \emph{admissible ideal on $S$}. The smallest admissible ideal on a set $S$ is the  system $\Fin(S)$ of all finite subsets of $S$. In the case $S=\NN$ we will write $\Fin$ instead of $\Fin(\NN)$ for short.

The dual notion to the ideal is the notion of the filter. I.e., a \emph{filter on $S$} is a non-empty system of subsets of $S$, which is closed under supersets and finite intersections.

A filter $\FF$ is called \emph{free} if $\bigcap\FF=\emptyset$.

A filter $\FF$ on a set $S$ is called \emph{proper} if $\FF\ne\powerset{S}$ (or, equivalently, $\emps\notin\FF$).

For every ideal $\I$ on $S$ we have the corresponding dual filter $\FI=\{S\setminus A: A\in \I\}$ on the same set $S$.

If $\I$ is an ideal on $S$ and $M\subseteq S$ then we denote by $\I|_M$ the trace of the ideal $\I$ on the subset $M$, i.e.
$$\I|_M=\{A\cap M; A\in\I\}.$$
The dual filter is $\FF(\I|_M)=\{F\cap M; F\in\FI\}$.

If $\Zobr fSX$ is a map and $\FF$ is a filter on the set $S$, then $\Obr f{\FF}=\{B\subseteq X; \Invobr fB\in\FF\}$ is a filter on the set $X$ called the image of $\FF$ under $f$.

We say that a filter $\FF$ on a topological space $X$ converges to a point $x\in X$ if it contains the filter of all neighborhoods of the point $x$, i. e., $\mc N(x)\subseteq \FF$.

\begin{DEF}
Let $\I$ be an ideal on a set $S$. A map $\Zobr fSX$ is $\I$-convergent to $x\in X$ if
$$\Invobr fU\in\FI$$
is true for each neighborhood $U$ of the point $x$. Equivalently we can say that the filter $\Obr f{\FI}$ converges to $x$.
\end{DEF}

\begin{REM}
As claimed in the above definition $\I$-convergence is equivalent to convergence of the filter $\Obr f{\FI}$. Similar result is true for many other notions, such as $\I$-cluster points, $\I$-Cauchy sequences. So when dealing with some notion related to $\I$-convergence we often can formulate this notion using the dual filter $\FI$ on a set $S$ and also using the filter $\Obr f{\FI}$ on the space $X$.
The main reason we are working with the ideals and not the filters is that this is continuation of some research done previously in \cite{KSW,DASGHOSAL,MACSLEIK} and many other papers; where the results were formulated using ideals. (Of course, every such result can be easily reformulated using dual filters.)
\end{REM}

Let us mention some special cases of $\I$-convergence.

Let $X$ be a topological space and $\FF$ be any filter on $X$. If $\I:=\{X\sm F; F\in\FF\}$ is the dual ideal to $\FF$, then the map $\Zobr{id_X}XX$ is $\I$-convergent to $x$ if and only if $\FF$ is convergent to $x$.

If $(D,\le)$ is a directed set, then we can consider the filter $\FD$ on the set $D$ generated by the sets of the form $\{x\in D; x\ge d\}$, where $d\in D$. We denote the dual ideal to $\FD$ as $\ID$. If $\Zobr fDX$ is a net in a topological space, then the convergence of the net $(f(d))_{d\in D}$ to a point $x$ is equivalent to the $\ID$-convergence of the function $f$ to $x$.  The filter $\FD$ is called
\emph{section filter} by some authors (e.g. \cite[p.60]{BOURBAKIGTENG} and \cite[p.35, Section 2.6]{ALIPRANTISBORDER}).

This shows that the notion of $\I$-convergence generalizes the two types of convergence that are nowadays most commonly used in general topology.
It is perhaps worth mentioning that Bourbaki used the dual definition (based on filters) as a common setting for various types of convergence, see
\cite{BOURBAKIGTENG,DIXMIERGT}.

Another type of $\I$-convergence that was widely studied is the statistical convergence. This is the $\I$-convergence for the ideal
$\Id=\{A\subseteq\NN; d(A)=0\}$ consisting of all sets having asymptotic density zero.

As we have already mentioned in the introduction, the properties of statistical convergence were one of the motivations for studying $\I$-convergence and $\I^*$-convergence. In this paper we will need some facts about $\IhJ$-convergence. This type of convergence was defined in \cite{MACSLEIK} and it generalizes $\I^*$-convergence.

\begin{DEF}
Let $\I$, $\J$ be ideals on a set $S$. Let $X$ be a topological space and $x\in X$. A function $\Zobr fSX$ is said to be
\emph{$\IhJ$-convergent} to $x$ if there is a set $M\in\FI$ such that the function $\Zobr gSX$ given by
$$g(s)=
  \begin{cases}
    f(s) & s\in M, \\
    x & s\notin M,
  \end{cases}
$$
is $\J$-convergent to $x$.
\end{DEF}

Notice that $\IhJ$-convergence can be equivalently defined by saying that $f|_M$ is $\J|_M$-convergent to $x$ for some $M\in\FI$.

For $S=\NN$ and $\J=\Fin$ we get the notion of \emph{$\I^*$-convergence} of sequences, which was studied, for example, in \cite{KSW,KMSS,LAHDASTOP}.

When studying the relationship between $\I$-convergence and $\IhJ$-convergence, the following condition was important.
\begin{DEF}
Let $\I$, $\J$ be ideals on the same set $S$. We say that the condition $\APIJ$ is fulfilled or that the ideal $\I$ has the \emph{additive property with respect to $\J$} if, for every sequence of pairwise disjoint sets $A_n\in\I$, there exists a sequence $B_n\in\I$ such that $A_n\triangle B_n\in\J$ for each $n$ and $\bigcup_{n\in\NN} B_n\in\I$.
\end{DEF}
Several equivalent reformulations of $\APIJ$ are given in \cite[Lemma 3.9]{MACSLEIK}.

In \cite[Theorem 3.11]{MACSLEIK} it was shown that if $\APIJ$ holds then $\I$-convergence implies $\IhJ$-convergence. If the space $X$ is countably generated and it is not discrete, then the condition $\APIJ$ is not only sufficient but also necessary.

In the case $S=\NN$ and $\J=\Fin$ we get the condition AP, which was used in \cite{KSW} to characterize ideals for which $\I$-convergence and $\I^*$-convergence of sequences in metric spaces is equivalent. Ideals fulfilling the condition $\APIFin$ are
sometimes called \emph{P-ideals} (see for example \cite{BADEKO}, \cite{FARAHMEMOIRS} or \cite{FILIPOWBOLZ}).

The concept of Cauchyness plays the central role in this paper. To study the properties of Cauchy functions in natural framework we will use uniform spaces. We give the definition of uniform space following Engelking's book \cite{ENGNEW}.
\begin{DEF}
Let $X$ be a set. A set $U\subseteq X\times X$ is called \emph{entourage of the diagonal} if $\Delta=\{(x,x); x\in X\}\subseteq U$ and $U=\inv U$.

Let $\Phi$ be a family of entourages of diagonal. The pair $(X,\Phi)$ is called \emph{uniform space} if for all entourages $U$, $V$ the conditions are satisfied:
\begin{compactenum}
\enu
  \item $U\in\Phi$ $\land$ $V\supseteq U$ $\Ra$ $V\in\Phi$;
  \item $U,V\in\Phi$ $\Ra$ $U\cap V\in\Phi$;
  \item $U\in\Phi$ $\Ra$ $(\exists V\in\Phi) V\circ V=\{(x,z); (\exists y\in X) (x,y),(y,z)\in V\} \subseteq U$;
  \item $\bigcap\Phi=\Delta$.
\end{compactenum}

The sets of the form
$$U[x]=\{y\in X; (x,y)\in U\}$$
give a local base at $x$ for the topology induced by the uniformity $\Phi$.
\end{DEF}

Note that the last condition in the definition of uniformity implies that the induced topology is Hausdorff. (Some authors omit this last condition in the definition of uniformity. Similarly, some text do not require entourages to be symmetric, the condition $U\in\Phi$ $\Ra$ $\inv U\in\Phi$ is included in the definition instead.)

A uniform space $(X,\Phi)$ is called \emph{complete} if every Cauchy net in $X$ is convergent. If $X$ is a metric space, then it suffices to require this condition for sequences. A complete uniform space can be equivalently defined as a uniform space in which every Cauchy filter is convergent. Another equivalent characterization of completeness is that every family $\mc F$ of closed subsets of $X$, which has finite intersection property and contains arbitrarily small sets, has a non-empty intersection. A family $\mc F$ of subsets of a uniform space $(X,\Phi)$ is said to contain \emph{arbitrarily small sets} if, for every $U\in\Phi$, there is an $F\in\mc F$ with $F\times F\subseteq U$. See \cite[p.446, Theorem 8.3.20, Theorem 8.3.21]{ENGNEW} for more details.

Every uniform space $(X,\Phi)$ has a \emph{completion} $(\widetilde X,\widetilde \Phi)$. The completion is unique up to isomorphism and it has the same weight as the original uniformity. See \cite[Theorem 8.3.12]{ENGNEW}.

\section{$\IhJ$-convergent and $\IhJ$-Cauchy functions}

Now we can define in a full generality the notion of Cauchy function and make some basic observations.

\begin{DEF}
Let $S$ be a set and $(X,\Phi)$ be a uniform space. Let $\I$ be an ideal on the set $S$. Let $\Zobr fSX$ be a map. The map $f$ is called \emph{$\I$-Cauchy} if for any $U\in\Phi$ there exists an $m\in S$ such that
$$\{n\in S; (f(n),f(m))\notin U\}\in\I.$$
\end{DEF}

\begin{LM}\label{LMIFFCAUCHY}
Let $(X,\Phi)$ be a uniform space and let $\I$ be an ideal on a set $S$. For a function $\Zobr fSX$ following are equivalent.
\begin{compactenum}
\enu
	\item $f$ is $\mathcal{I}$-Cauchy.
	\item For any $U\in\Phi$ there is $m\in S$ such that
			$$\{n\in S; (f(n),f(m))\in U\}\in\FI.$$
	\item For every $U\in\Phi$ there exists a set $A \in \mathcal{I}$ such that $m, n \notin A$ implies $(f(m), f(n)) \in U$.
	\item The filter $\Obr f{\FI}$ is a Cauchy filter.
\end{compactenum}
\end{LM}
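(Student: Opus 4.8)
The plan is to verify $(i)\Leftrightarrow(ii)$ directly and then to close the cycle $(ii)\Ra(iii)\Ra(iv)\Ra(ii)$, which together yield all the equivalences. The first step is immediate: the set appearing in $(ii)$ is the complement in $S$ of the set $\{n\in S;(f(n),f(m))\notin U\}$ from the definition of $\I$-Cauchy, and by definition a subset of $S$ lies in $\FI$ exactly when its complement lies in $\I$; so $(i)$ and $(ii)$ are literally the same assertion.

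For $(ii)\Ra(iii)$ I would fix $U\in\Phi$ and, by the defining property of uniformities, choose $V\in\Phi$ with $V\circ V\subseteq U$. Since every entourage is symmetric ($V=\inv V$) and contains $\Delta$, applying $(ii)$ to $V$ yields an $m\in S$ with $F:=\{n\in S;(f(n),f(m))\in V\}\in\FI$, and in fact $m\in F$ because $(f(m),f(m))\in\Delta\subseteq V$. Put $A:=S\sm F\in\I$. For $p,q\notin A$ we have $(f(p),f(m))\in V$ and $(f(m),f(q))\in V$ (the second by symmetry), hence $(f(p),f(q))\in V\circ V\subseteq U$; thus $A$ witnesses $(iii)$.

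For $(iii)\Ra(iv)$ the idea is: given $U\in\Phi$, take $A\in\I$ as in $(iii)$ and set $B:=\Obr{f}{S\sm A}$, the image of $S\sm A$. Then $B\times B\subseteq U$, since any two elements of $B$ are values $f(p),f(q)$ with $p,q\notin A$; and $\Invobr fB\supseteq S\sm A\in\FI$, so $\Invobr fB\in\FI$, i.e.\ $B\in\Obr f{\FI}$. Hence $\Obr f{\FI}$ contains arbitrarily small sets, which is exactly the statement that $\Obr f{\FI}$ is a Cauchy filter (a filter on a uniform space being Cauchy precisely when it contains arbitrarily small sets). For $(iv)\Ra(ii)$ I would reverse this: given $U\in\Phi$, Cauchyness of $\Obr f{\FI}$ supplies $B\in\Obr f{\FI}$ with $B\times B\subseteq U$, so $\Invobr fB\in\FI$; choosing any $m\in\Invobr fB$ (nonempty, as $\FI$ is a proper filter), every $n\in\Invobr fB$ satisfies $(f(n),f(m))\in B\times B\subseteq U$, so $\{n\in S;(f(n),f(m))\in U\}\supseteq\Invobr fB\in\FI$, which is $(ii)$.

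There is no genuine obstacle here; the only things to watch are the usual uniform-space manipulations — selecting $V$ with $V\circ V\subseteq U$ and exploiting symmetry of entourages in $(ii)\Ra(iii)$ — and a couple of degenerate cases (an improper ideal $\I=\powerset S$, or an empty preimage $\Invobr fB$), each of which is trivial: when $\I=\powerset S$ all four conditions hold vacuously, so one may assume $\I$ proper throughout.
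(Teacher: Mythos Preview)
Your argument is correct. The paper itself omits the proof entirely, saying only ``The proof is straightforward and so it is omitted,'' so there is nothing to compare against; your cycle $(i)\Leftrightarrow(ii)\Ra(iii)\Ra(iv)\Ra(ii)$ with the $V\circ V\subseteq U$ trick for $(ii)\Ra(iii)$ is exactly the routine verification one would expect, and your handling of the degenerate improper-ideal case is appropriate.
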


The proof is straightforward and so it is omitted. In the case of sequences equivalence of some of these conditions was shown in \cite[Proposition 4]{DEMSICAUCH}.

It can be easily seen that every $\I$-convergent function is $\I$-Cauchy.

Again we can mention some special cases of this notion. A filter $\FF$ on a uniform space $X$ is Cauchy if and only if $\Zobr {id_X}XX$ is $\I$-Cauchy with respect to the dual ideal. A net $\Zobr fDX$ on a directed set $D$ is Cauchy if and only if it is $\ID$-Cauchy.

It is easy to get directly from the definition that $\I$-Cauchy function is $\J$-Cauchy for any finer ideal:
\begin{LM}\label{LMFINERCAUCHY}
Let $\I_1$, $\I_2$ be ideals on a set $S$ such that $\I_1\subseteq\I_2$. Let $X$ be a uniform space. If $\Zobr fSX$ is $\I_1$-Cauchy then it is also $\I_2$-Cauchy.
\end{LM}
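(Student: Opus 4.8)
The plan is to unwind the definition of $\I$-Cauchy directly and observe that the witness does not change. Fix an arbitrary entourage $U\in\Phi$. Since $f$ is $\I_1$-Cauchy, there is a point $m\in S$ such that
$$A:=\{n\in S; (f(n),f(m))\notin U\}\in\I_1.$$
Because $\I_1\subseteq\I_2$, the very same set $A$ belongs to $\I_2$, so the same point $m$ witnesses the $\I_2$-Cauchy condition for the entourage $U$. As $U\in\Phi$ was arbitrary, $f$ is $\I_2$-Cauchy.

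There is essentially no obstacle here; the only thing worth noting is that the point $m$ need not be replaced when we pass to the larger ideal, so no auxiliary argument is required. Alternatively, one could argue via the dual filters: $\I_1\subseteq\I_2$ gives $\FF(\I_2)\subseteq\FF(\I_1)$, hence $\Obr f{\FF(\I_2)}\subseteq\Obr f{\FF(\I_1)}$, and any filter coarser than a Cauchy filter is again Cauchy; combining this with the equivalence (i)$\Leftrightarrow$(iv) of Lemma \ref{LMIFFCAUCHY} yields the claim. Either route is immediate, which is why the proof can safely be left to the reader or stated in a single line.
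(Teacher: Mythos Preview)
Your direct argument is correct and is exactly what the paper has in mind: the lemma is stated with the remark that it follows ``directly from the definition,'' and no formal proof is given.

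However, your alternative filter argument has the inclusions reversed. From $\I_1\subseteq\I_2$ one gets $\FF(\I_1)\subseteq\FF(\I_2)$ (if $F=S\setminus A$ with $A\in\I_1$, then $A\in\I_2$ too), hence $\Obr f{\FF(\I_1)}\subseteq\Obr f{\FF(\I_2)}$. The relevant fact is then that a filter \emph{finer} than a Cauchy filter is again Cauchy, which is precisely what the paper cites after the lemma. Your claim that ``any filter coarser than a Cauchy filter is again Cauchy'' is false in general (the indiscrete filter $\{X\}$ is coarser than every filter). This does not affect the validity of your main proof, but the parenthetical remark should be corrected.
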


This can also be deduced from the fact that filter, which is finer than a Cauchy filter, is again Cauchy, see \cite[Proposition 2.3.2]{BEATTIEBUTZMANNCONVERGENCEFA}, \cite[p.188]{BOURBAKIGTENG}, \cite[p.299]{GAHLER} or \cite[19.3]{SCHECHTERHBK}.

\begin{DEF}
Let $S$ be a set and $(X,\Phi)$ be a uniform space. Let $\I$, $\J$ be ideals on the set $S$.  A map $\Zobr fSX$ is said to be $\IhJ$-Cauchy if there is a subset $M\subseteq S$ such that $M\in\FI$ and the function $f|_M$ is $\J|_M$-Cauchy.
\end{DEF}

In the case that $\J=\Fin$ we obtain the notion of $\I^*$-Cauchy sequences, which was studied in \cite{NABPEHGUR}.
The $\I^*$-Cauchy nets introduced in \cite{DASGHOSALCOMPLETE} are precisely the $\IhJh{\I}{\ID}$-Cauchy functions.

It is relatively easy to see directly from definition that every $\IhJ$-convergent function is $\IhJ$-Cauchy.

\begin{LM}\label{LMFINERIKCAUCHY}
Let $S$ be a set and $(X,\Phi)$ be a uniform space. Let $\I$, $\I_1$, $\J$, $\J_1$ be ideals on the set $S$ such that $\I\subseteq\I_1$ and $\J\subseteq\J_1$.

If a map $\Zobr fSX$ is $\IhJ$-Cauchy, then it is also $\IhJh{\I_1}{\J}$-Cauchy.

If a map $\Zobr fSX$ is $\IhJ$-Cauchy, then it is also $\IhJh{\I}{\J_1}$-Cauchy.
\end{LM}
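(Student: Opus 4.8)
The plan is to prove both implications directly from the definition of $\IhJ$-Cauchy, using the fact that enlarging either ideal makes the relevant conditions easier to satisfy. So suppose $f\colon S\to X$ is $\IhJ$-Cauchy; by definition there is a set $M\in\FI$ such that $f|_M$ is $\J|_M$-Cauchy.

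For the first implication, I would observe that $\I\subseteq\I_1$ gives $\FI\subseteq\FF(\I_1)$, hence the very same $M$ lies in $\FF(\I_1)$. Since the witnessing function $f|_M$ is already $\J|_M$-Cauchy, the pair $(M, f|_M)$ witnesses that $f$ is $\IhJh{\I_1}{\J}$-Cauchy, and there is nothing more to do.

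For the second implication, I would again take the same $M\in\FI$, and now I need $f|_M$ to be $\J_1|_M$-Cauchy. The point is that $\J\subseteq\J_1$ implies $\J|_M\subseteq\J_1|_M$, so by Lemma~\ref{LMFINERCAUCHY} (applied on the ground set $M$) the map $f|_M$, being $\J|_M$-Cauchy, is also $\J_1|_M$-Cauchy. Thus $(M,f|_M)$ witnesses that $f$ is $\IhJh{\I}{\J_1}$-Cauchy.

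There is essentially no obstacle here: the proof is a straightforward unwinding of definitions together with Lemma~\ref{LMFINERCAUCHY}. The only minor point worth checking is the inclusion $\J|_M\subseteq\J_1|_M$, which is immediate since any set of the form $A\cap M$ with $A\in\J$ also has $A\in\J_1$. Consequently this lemma could reasonably be stated with the proof omitted, exactly as was done for Lemmas~\ref{LMIFFCAUCHY} and~\ref{LMFINERCAUCHY}.
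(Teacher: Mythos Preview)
Your proof is correct and matches the paper's argument essentially line for line: the same witness $M$ is reused, with $\FI\subseteq\FF(\I_1)$ handling the first implication and $\J|_M\subseteq\J_1|_M$ together with Lemma~\ref{LMFINERCAUCHY} handling the second.
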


\begin{proof}
If $\Zobr fSX$ is $\IhJ$-Cauchy then there is a subset $M\in\FI$ such that $f|_M$ is $\J|_M$-Cauchy. Since $\FI\subseteq\FF(\I_1)$, we have $M\in\FF(\I_1)$. This means that $f$ is also $\IhJh{\I_1}{\J}$-Cauchy.

To prove the second part we just need to notice that $\J\subseteq\J_1$ implies $\J|_M\subseteq\J_1|_M$. From Lemma \ref{LMFINERCAUCHY} we get that if $f|_M$ is $\J|_M$-Cauchy, then it is also $\J_1|_M$-Cauchy. This proves that $f$ is $\IhJh{\I}{\J_1}$-Cauchy.
\end{proof}

\begin{LM}\label{LMKCAUCHY}
Let $S$ be a set and $(X,\Phi)$ be a uniform space. Let $\I$, $\J$ be ideals on $S$. If a map $\Zobr fSX$ is $\J$-Cauchy, then it is also $\IhJ$-Cauchy.
\end{LM}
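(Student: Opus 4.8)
The plan is to exhibit a witnessing set $M\in\FI$ for the $\IhJ$-Cauchy condition; the natural and essentially forced choice is $M=S$ itself, which always lies in $\FI$ since $S=S\sm\emps$ and $\emps\in\I$. With this choice the claim reduces to showing that $f|_S=f$ being $\J|_S$-Cauchy is the same as $f$ being $\J$-Cauchy. So the real content is the observation that $\J|_S=\J$: indeed every element of $\J$ is a subset of $S$, so $A=A\cap S\in\J|_S$, and conversely every element of $\J|_S$ has the form $A\cap S$ with $A\in\J$, hence lies in $\J$ by heredity (in fact $A\cap S=A$).

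First I would record $M:=S\in\FI$. Then I would note that the restriction $f|_M=f$ and that the trace ideal $\J|_M=\J|_S$ coincides with $\J$ by the displayed definition of $\I|_M$ together with the fact that all sets in question are already subsets of $S$. Finally, applying the hypothesis that $f$ is $\J$-Cauchy directly gives that $f|_M$ is $\J|_M$-Cauchy, which is exactly what is required for $f$ to be $\IhJ$-Cauchy.

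There is essentially no obstacle here — the statement is a triviality once one unwinds the definitions, in the same spirit as Lemma \ref{LMFINERIKCAUCHY}. The only point worth a sentence is the remark that $S\in\FI$ for \emph{every} ideal $\I$ on $S$ (even improper ones), so the argument needs no assumption on $\I$. One could alternatively phrase the proof via the filter reformulation of Lemma \ref{LMIFFCAUCHY}, but the direct set-theoretic argument is shorter. I would keep the write-up to two or three sentences.
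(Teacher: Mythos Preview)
Your proposal is correct and follows exactly the same approach as the paper's own proof: take $M=S\in\FI$, observe that $\J|_S=\J$, and conclude that $f|_M=f$ is $\J|_M$-Cauchy. The paper's write-up is indeed just two sentences, so your instinct to keep it brief is right.
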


\begin{proof}
If we take $M=S$, then $M\in\FI$. In this case $\J|_M=\J$, hence $f$ is $\J|_M$-Cauchy. This shows that $f$ is $\IhJ$-Cauchy.
\end{proof}

\subsection{$\IhJ$-convergent and $\IhJ$-Cauchy functions for $\I=\J$}

In this part we discuss special cases of the notions studied in this paper in the case $\I=\J$.

\begin{PROP}\label{PROPI=J}
Let $\Zobr fSX$ be a map, $\I$ be an ideal on the set $S$ and $X$ be a topological space.

\begin{compactenum}
\enu
  \item The map $f$ is $\IhI$-convergent to $x$ if and only if $f$ is $\I$-convergent to $x$.
  \item Let as assume that $(X,\Phi)$ is additionally a uniform space. Then $f$ is $\IhI$-Cauchy if and only if it is $\I$-Cauchy.
\end{compactenum}
\end{PROP}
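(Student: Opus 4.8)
The plan is to prove the two equivalences separately. In each case one implication comes essentially for free — from the trivial choice $M=S$ in part (i), and from Lemma \ref{LMKCAUCHY} in part (ii) — while the other uses the elementary facts that the trace ideal satisfies $\I|_M\subseteq\I$ and that $S\sm M\in\I$ whenever $M\in\FI$.

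For part (i): if $f$ is $\I$-convergent to $x$, take $M=S\in\FI$; then the auxiliary function $g$ appearing in the definition of $\IhI$-convergence equals $f$ and is $\I$-convergent to $x$, so $f$ is $\IhI$-convergent to $x$. Conversely, assume $f$ is $\IhI$-convergent to $x$, witnessed by some $M\in\FI$ and the associated $g$, and fix a neighborhood $U$ of $x$. Since $f$ and $g$ agree on $M$, we get $f^{-1}(U)\cap M=g^{-1}(U)\cap M$; this set belongs to $\FI$ because $g^{-1}(U),M\in\FI$, and as $f^{-1}(U)\supseteq f^{-1}(U)\cap M$, closure of $\FI$ under supersets yields $f^{-1}(U)\in\FI$. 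Hence $f$ is $\I$-convergent to $x$. (Alternatively, one can use the reformulation of $\IhJ$-convergence in terms of $f|_M$ being $\J|_M$-convergent, specialized to $\J=\I$, and argue the same way.)

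For part (ii): Lemma \ref{LMKCAUCHY} with $\J=\I$ immediately gives that every $\I$-Cauchy map is $\IhI$-Cauchy. For the converse, let $f$ be $\IhI$-Cauchy, so there is $M\in\FI$ with $f|_M$ being $\I|_M$-Cauchy. Given $U\in\Phi$, choose $m\in M$ with $A:=\{n\in M;(f(n),f(m))\notin U\}\in\I|_M$. Since $\I|_M\subseteq\I$ (any $B\cap M$ with $B\in\I$ lies in $\I$ by heredity) we have $A\in\I$, and $S\sm M\in\I$ as $M\in\FI$; therefore
$$\{n\in S;(f(n),f(m))\notin U\}\subseteq A\cup(S\sm M)\in\I,$$
so this set lies in $\I$. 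As $m\in S$, this shows $f$ is $\I$-Cauchy. The argument involves no real obstacle; the only points needing a moment's attention are the identity $f^{-1}(U)\cap M=g^{-1}(U)\cap M$ in part (i) and the containment $\I|_M\subseteq\I$ in part (ii).
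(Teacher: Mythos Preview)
Your proof is correct and follows essentially the same approach as the paper: in each part the ``easy'' direction is handled by the trivial choice $M=S$ (the paper does this explicitly rather than invoking Lemma~\ref{LMKCAUCHY}, but it is the same argument), and the other direction uses that $F\cap M\in\FI$ whenever $F,M\in\FI$. The only cosmetic differences are that the paper works with the equivalent reformulation via $f|_M$ and $\J|_M$ in part~(i), and in part~(ii) it uses the characterization of $\I$-Cauchy from Lemma~\ref{LMIFFCAUCHY}(iii) rather than the single-point definition you use.
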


\begin{proof}
(i): $\boxed{\Ra}$ Suppose that $f$ is $\IhI$-convergent to $x$. Hence there is a set $M\in\FI$ such that $f|_M$ is $\I|_M$ convergent. This means that for any neighborhood $U$ of $x$, there exists $F\in\FI$ such that
$$\Invobr fU\cap M=F\cap M.$$
Clearly, this implies that $\Invobr fU\in\FI$, since $\Invobr fU\supseteq F\cap M$ and $F\cap M\in\FI$.

$\boxed{\Leftarrow}$ Suppose that $f$ is $\I$-convergent to $x$. Then for $M=S$ we have $\I|_M=\I$. This implies that $f$ is also $\IhI$-convergent to $x$.

(ii): $\boxed{\Ra}$ Let $f$ be $\IhI$-Cauchy. This means that, for some $M\in\FI$, the restriction $f|_M$ is $\I|_M$-Cauchy, i.e., for each $U\in\Phi$ there is $F\in\FI$ such that
$$x,y\in F\cap M \qquad \Ra \qquad (f(x),f(y))\in U.$$
Since the set $F\cap M$ belongs to $\FI$, this shows that $f$ is also $\I$-Cauchy.

$\boxed{\Leftarrow}$ Again, we can take $M=S$. We have $\I|_M=\I$, so if $f$ is $\I$-Cauchy, then $f|_M=f$ is also $\I|_M$-Cauchy. Thus $f$ is $\IhI$-Cauchy.
\end{proof}

If we combine Proposition \ref{PROPI=J} with results about finer ideals, we get the following result:
\begin{COR}\label{CORSUBSETIMPLIES}
Let $\I$, $\J$ be ideals on a set $S$. Let $\Zobr fSX$ be a map from $S$ to a topological space (a uniform space) $X$

Suppose that $\J\subseteq\I$. Then if $f$ is $\IhJ$-convergent to $x$, it is also $\I$-convergent to $X$. Similarly,
if $f$ is $\IhJ$-Cauchy then it is also $\I$-Cauchy.
\end{COR}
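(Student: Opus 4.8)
The plan is to combine Proposition~\ref{PROPI=J} with the monotonicity lemmas already established, namely Lemma~\ref{LMFINERIKCAUCHY} and the corresponding monotonicity statement for $\IhJ$-convergence (which follows in the same way, or is part of the cited results of \cite{MACSLEIK}). Since $\J\subseteq\I$, the ideal $\IhI$ is at least as ``large'' on the $\J$-side as $\IhJ$, so anything that is $\IhJ$-convergent (resp.\ $\IhJ$-Cauchy) should be $\IhI$-convergent (resp.\ $\IhI$-Cauchy), and then Proposition~\ref{PROPI=J} collapses $\IhI$ back to $\I$.

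First I would treat the convergence statement. Suppose $f$ is $\IhJ$-convergent to $x$. Applying the monotonicity of $\IhJ$-convergence in the second ideal with $\J\subseteq\I$, we get that $f$ is $\IhI$-convergent to $x$. By Proposition~\ref{PROPI=J}(i), $\IhI$-convergence to $x$ is the same as $\I$-convergence to $x$, so $f$ is $\I$-convergent to $x$, as claimed. (If we prefer not to invoke the monotonicity of $\IhJ$-convergence, we can argue directly: there is $M\in\FI$ with $f|_M$ being $\J|_M$-convergent to $x$; since $\J|_M\subseteq\I|_M$, the function $f|_M$ is also $\I|_M$-convergent to $x$, which is exactly $\IhI$-convergence, and then Proposition~\ref{PROPI=J}(i) finishes it.)

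Next I would do the Cauchy statement in exactly the same pattern. Assume $f$ is $\IhJ$-Cauchy. By Lemma~\ref{LMFINERIKCAUCHY} (second part, applied with $\J_1=\I$), since $\J\subseteq\I$ the map $f$ is $\IhI$-Cauchy. By Proposition~\ref{PROPI=J}(ii), being $\IhI$-Cauchy is equivalent to being $\I$-Cauchy, hence $f$ is $\I$-Cauchy. Again a direct unwinding is available: there is $M\in\FI$ with $f|_M$ being $\J|_M$-Cauchy; since $\J|_M\subseteq\I|_M$, Lemma~\ref{LMFINERCAUCHY} gives that $f|_M$ is $\I|_M$-Cauchy, i.e.\ $f$ is $\IhI$-Cauchy, and Proposition~\ref{PROPI=J}(ii) concludes.

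There is essentially no obstacle here; the whole corollary is a two-line bookkeeping consequence of the preceding results, and the only thing to be slightly careful about is matching the inclusion $\J\subseteq\I$ to the correct slot (the second ideal) in Lemma~\ref{LMFINERIKCAUCHY} and in the monotonicity of $\IhJ$-convergence, and then correctly collapsing the diagonal case $\IhI$ via Proposition~\ref{PROPI=J}. I would keep the written proof to the short ``chain of implications'' form rather than spelling out the direct unwinding, since that is the style used for the other easy corollaries in this section.
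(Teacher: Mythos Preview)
Your proposal is correct and matches the paper's proof essentially verbatim: the paper invokes \cite[Proposition 3.6]{MACSLEIK} (monotonicity of $\IhJ$-convergence in the second ideal) together with Proposition~\ref{PROPI=J}(i) for the convergence part, and Lemma~\ref{LMFINERIKCAUCHY} together with Proposition~\ref{PROPI=J}(ii) for the Cauchy part. Your optional direct unwinding is also fine but, as you note, unnecessary given the prior results.
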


\begin{proof}
Let $\J\subseteq\I$.

By \cite[Proposition 3.6]{MACSLEIK} every $\IhJ$-convergent function is also $\IhI$-convergent. This is the same thing as $\I$-convergent.

By Lemma \ref{LMFINERIKCAUCHY} every $\IhJ$-Cauchy function $f$ is also $\IhI$-Cauchy. The latter is equivalent to $f$ being $\I$-Cauchy.
\end{proof}

Of course, both results in this corollary could be also easily proved directly from the definition.

The first part of this Corollary is \cite[Proposition 3.7]{MACSLEIK}. As a special case of the second part for $\J=\Fin$ we obtain \cite[Theorem 3]{NABPEHGUR}.

\subsection{$\IhJ$-convergence and $\IhJh{(\I\vee\J)}{\J}$-convergence}

For any two ideals $\I$, $\J$ we have the ideal
$$\I\vee\J=\{A\cup B; A\in\I, B\in\J\}.$$
This is the smallest ideal containing both $\I$ and $\J$.

The dual filter is
$$\FF(\I\vee\J)=\FI\vee\FJ=\{F\cap G; F\in\FI, G\in\FJ\}.$$

Clearly, if $M\in \FF(\I)$, then also $M\in\FF(\I\vee\J)$. Therefore any $\IhJ$-convergent function is also $\IhJh{(\I\vee\J)}{\J}$-convergent, every $\IhJ$-Cauchy function is $\IhJh{(\I\vee\J)}{\J}$-Cauchy. (These properties only depend on the ideal $\J|_M$.) These facts are also clear from Lemma \ref{LMFINERIKCAUCHY}, since $\I\subseteq\I\vee\J$.

The question whether the opposite implication is true is answered in the following proposition:
\begin{PROP}\label{PROPIVEEJ}
Let $\I$, $\J$ be ideals on a set $S$.

A function $\Zobr fSX$ is $\IhJ$-convergent to $x$ if and only if it is $\IhJh{(\I\vee\J)}{\J}$-convergent to $x$.

A function $\Zobr fSX$ is $\IhJ$-Cauchy if and only if it is $\IhJh{(\I\vee\J)}{\J}$-Cauchy.
\end{PROP}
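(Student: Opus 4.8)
The plan is to prove both equivalences together by reducing the nontrivial direction to the already-established fact that $\IhJh{(\I\vee\J)}{\J}$-convergence (resp. Cauchyness) is the same thing as having a witnessing set in $\FF(\I\vee\J)$ on whose trace the function is $\J$-convergent (resp. $\J$-Cauchy). The forward implication is immediate from $\I\subseteq\I\vee\J$ (Lemma \ref{LMFINERIKCAUCHY}), so everything comes down to showing: if there is $N\in\FF(\I\vee\J)$ with $f|_N$ being $\J|_N$-convergent (resp. $\J|_N$-Cauchy), then there is $M\in\FI$ with $f|_M$ being $\J|_M$-convergent (resp. $\J|_M$-Cauchy).

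First I would unpack $N\in\FF(\I\vee\J)$: by the displayed formula for the dual filter, $N=F\cap G$ for some $F\in\FI$ and $G\in\FJ$. Set $M:=F$; then $M\in\FI$, which is the correct witnessing set for the $\IhJ$-side. The task is now to transfer the $\J$-good behaviour of $f$ from $N=F\cap G$ to $M=F$. The key observation is that $M\sm N=F\sm(F\cap G)=F\sm G\subseteq S\sm G\in\J$, so $M$ and $N$ differ by a set in $\J$. For convergence: if $f|_N$ is $\J|_N$-convergent to $x$, then for each neighbourhood $U$ of $x$ we have $\Invobr fU\cap N\in\FF(\J|_N)$, i.e. $N\sm\Invobr fU\in\J$; then $M\sm\Invobr fU\subseteq(N\sm\Invobr fU)\cup(M\sm N)\in\J$, so $\Invobr fU\cap M\in\FF(\J|_M)$, giving that $f|_M$ is $\J|_M$-convergent to $x$. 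For the Cauchy case one argues analogously using the characterization in Lemma \ref{LMIFFCAUCHY}(iii): given $U\in\Phi$, pick $A\in\J|_N$ with $m,n\in N\sm A\Ra(f(m),f(n))\in U$; then $A':=A\cup(S\sm G)\in\J$ and $m,n\in M\sm A'$ forces $m,n\in N\sm A$ (since $m,n\in F$ and $m,n\notin S\sm G$ give $m,n\in F\cap G=N$, and $m,n\notin A$), hence $(f(m),f(n))\in U$; thus $A'\cap M\in\J|_M$ witnesses that $f|_M$ is $\J|_M$-Cauchy.

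Putting these together, $M=F\in\FI$ witnesses $\IhJ$-convergence (resp. $\IhJ$-Cauchyness) of $f$, which completes the nontrivial direction and hence the proposition.

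I do not expect any serious obstacle here; the only thing to be careful about is the bookkeeping that replacing the witnessing set $N$ by the larger set $M=F$ only enlarges the "bad" sets by something in $\J$, so the $\J$-smallness is preserved. An alternative, slightly slicker phrasing avoids choosing $F,G$ explicitly: note $\J|_N\subseteq\J|_M$ is false in general, but $\FF(\J|_N)$ and $\FF(\J|_M)$ are related because $N\in\FF(\J|_M)$ (indeed $M\sm N\in\J$), and a filter trace argument shows $f|_M$ inherits the good behaviour; I would, however, present the explicit computation above since it is short and self-contained.
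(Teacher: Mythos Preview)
Your proof is correct and follows essentially the same approach as the paper: decompose the witnessing set $N\in\FF(\I\vee\J)$ as $F\cap G$ with $F\in\FI$, $G\in\FJ$, take $M=F$ as the new witness, and absorb the discrepancy $M\sm N\subseteq S\sm G\in\J$ into the $\J$-smallness. The only cosmetic difference is that the paper phrases the bookkeeping on the filter side (producing a set in $\FJ$ whose trace on $M$ lies in the preimage), while you phrase it on the ideal side (showing the complement in $M$ lies in $\J$); these are dual formulations of the same computation.
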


\begin{proof}
We already have one of the implications in both cases.

\emph{Convergence.} Suppose that $f$ is $\IhJh{(\I\vee\J)}{\J}$-convergent to $x$. This means that there is a set $M\in\FF(\I\vee\J)$ such that $f|_M$ is $\J|_M$-convergent, i.e., for every neighborhood $U$ of the point $x$ there exists $F\in\FJ$ such that
$$\Invobr fU\cap M=F\cap M.$$

Since $M\in\FF(\I\vee\J)$, it has form
$$M=M'\cap G$$
for some $M'\in\FI$ and $G\in\FJ$.

Now we have
$$\Invobr fU\cap M'\supseteq \Invobr fU\cap M = (F\cap G)\cap M'.$$
Since $F\cap G\in\FJ$, this shows that $\Invobr fU\cap M'\in\FF(\J|_{M'})$ and that $f$ is $\IhJ$-convergent.

\emph{Cauchy condition.} Now assume that $f$ is $\IhJh{(\I\vee\J)}{\J}$-Cauchy. This means that there is $M\in\FF(\I\vee\J)$ such that $f|_M$ is $\J|_M$-Cauchy. I.e., for every $U\in\Phi$ there exists $F\in\FJ$ such that
$$\{(f(x),f(y)); x,y\in F\cap M\}\subseteq U.$$
Again, we know that
$$M=M'\cap G$$
for some $M'\in\FI$ and $G\in\FJ$.

If we put $F'=F\cap G$ then $F'\in\FJ$ and
$$F'\cap M'=F\cap G\cap M'=F\cap M$$
which means that
$$\{(f(x),f(y)); x,y\in F'\cap M'\}\subseteq U.$$
This shows that $f$ is $\IhJ$-Cauchy.
\end{proof}

\begin{REM}
Proposition \ref{PROPIVEEJ} shows that we do not lose any generality, if we work with the assumption $\J\subseteq\I$. (If we are given any two ideals on a set $S$, then we can modify them to ideals such that the notion of $\IhJ$-convergence is the same for both pairs of ideals and, additionally, $\J\subseteq\I$.)

Moreover, in most settings $\J\subseteq\I$ is a very natural condition to assume.
\end{REM}

If $\I\subseteq\J$, then $\I\vee\J=\J$. So in this case we immediately get from Propositions \ref{PROPIVEEJ} and \ref{PROPI=J} that
$\IhJ$-convergence is equivalent to $\J$-convergence; and a function is $\IhJ$-Cauchy if and only it is $\J$-Cauchy.

\section{$\IhJ$-Cauchy functions and completeness}

Since we have defined some kind of Cauchy condition, it is very natural to ask how it is related to completeness of the space $X$. First we show that if $X$ is complete, then every $\IhJ$-Cauchy function is also $\IhJ$-convergent.

\begin{LM}\label{LMCOMPLCONV}
If $(X,\Phi)$ is a complete uniform space then any $\I$-Cauchy function $\Zobr fSX$ is also $\I$-convergent.
\end{LM}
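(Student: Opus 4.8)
The plan is to use the filter-based characterization of the $\I$-Cauchy condition from Lemma~\ref{LMIFFCAUCHY} together with the filter-based characterization of completeness quoted from Engelking. Concretely: if $f\colon S\to X$ is $\I$-Cauchy, then by Lemma~\ref{LMIFFCAUCHY}(iv) the image filter $\Obr f{\FI}$ on $X$ is a Cauchy filter. Since $(X,\Phi)$ is complete, every Cauchy filter converges, so $\Obr f{\FI}$ converges to some point $x\in X$, i.e.\ $\mc N(x)\subseteq\Obr f{\FI}$. Unwinding the definition of the image filter, for every neighborhood $U$ of $x$ we have $U\in\Obr f{\FI}$, which means $\Invobr fU\in\FI$; but this is exactly the statement that $f$ is $\I$-convergent to $x$.

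First I would recall the precise definition of the image filter $\Obr f{\FF}=\{B\subseteq X;\Invobr fB\in\FF\}$ given in the basic definitions, and note that membership $U\in\Obr f{\FI}$ is literally equivalent to $\Invobr fU\in\FI$. Then I would invoke Lemma~\ref{LMIFFCAUCHY} to pass from "$f$ is $\I$-Cauchy" to "$\Obr f{\FI}$ is a Cauchy filter"; this is the only nontrivial input and it has already been established (its proof was omitted as routine). The remaining two steps --- applying completeness to get convergence of the filter, and translating filter convergence back into $\I$-convergence of $f$ via the definition of $\I$-convergence (which itself was phrased in terms of $\Obr f{\FI}$ converging) --- are purely a matter of unwinding definitions.

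I do not expect any real obstacle here; the statement is essentially a repackaging of standard facts once the correspondence "$\I$-Cauchy $\leftrightarrow$ image filter Cauchy" and "$\I$-convergent $\leftrightarrow$ image filter convergent" is in hand. The only point that requires a moment's care is making sure that the point of convergence supplied by completeness of $X$ is the same point witnessing $\I$-convergence of $f$ --- but this is immediate, since both notions are defined through the single filter $\Obr f{\FI}$, so a limit point of that filter is by definition an $\I$-limit of $f$. One could alternatively give a direct $\ve$-style argument (choosing, for a given $U\in\Phi$, a symmetric $V$ with $V\circ V\subseteq U$, an $m\in S$ with $\{n;(f(n),f(m))\notin V\}\in\I$, and then using completeness via the "arbitrarily small closed sets" characterization), but the filter argument is shorter and cleaner, so that is the route I would take.
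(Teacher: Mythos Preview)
Your proposal is correct and follows essentially the same route as the paper's own proof: invoke Lemma~\ref{LMIFFCAUCHY}(iv) to see that $\Obr f{\FI}$ is a Cauchy filter, apply completeness to obtain convergence of this filter, and then translate back via the definition of $\I$-convergence. The paper's proof is just a more terse version of exactly this argument.
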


\begin{proof}
Recall that $f$ is $\I$-Cauchy if and only if $\Obr f{\FI}$ is Cauchy (Lemma \ref{LMIFFCAUCHY}) and $f$ is $\I$-convergent if and only if $\Obr f{\FI}$ is convergent. So now it suffices to use the fact that in a complete uniform space every Cauchy filter is convergent \cite[Theorem 8.3.21]{ENGNEW}.
\end{proof}

Notice that this gives a simpler proof of one implication of \cite[Theorem 2]{DEMSICAUCH}. Also \cite[Theorem 7]{DASGHOSALCAUCHYNETS} is a special case of Lemma \ref{LMCOMPLCONV}. In fact, this gives a complete answer to the problems posed in \cite[Remark 1]{DASGHOSALCAUCHYNETS}.

As another direct consequence of this we get the well-known result that Cauchy nets in complete uniform spaces are convergent, by applying the above lemma to the ideal $\ID$.

From Lemma \ref{LMCOMPLCONV} we get immediately:
\begin{COR}\label{CORCOMPLCONV}
If $(X,\Phi)$ is a complete uniform space, then any $\IhJ$-Cauchy function $\Zobr fSX$ is $\IhJ$-convergent.
\end{COR}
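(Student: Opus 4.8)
The plan is to deduce Corollary \ref{CORCOMPLCONV} directly from Lemma \ref{LMCOMPLCONV} together with the definitions of $\IhJ$-Cauchy and $\IhJ$-convergent functions, exploiting that both notions are defined by passing to a restriction $f|_M$ over some $M\in\FI$ and then invoking the unstarred ($\I$-type) notions for the trace ideal $\J|_M$.

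\begin{proof}
Let $\Zobr fSX$ be $\IhJ$-Cauchy. By definition there is a set $M\in\FI$ such that $f|_M\colon M\to X$ is $\J|_M$-Cauchy. Since $X$ is complete, Lemma \ref{LMCOMPLCONV} (applied to the ideal $\J|_M$ on the set $M$ and to the function $f|_M$) shows that $f|_M$ is $\J|_M$-convergent, say to a point $x\in X$. But the same set $M\in\FI$ then witnesses, via the reformulation of $\IhJ$-convergence noted after its definition (namely that $f$ is $\IhJ$-convergent to $x$ iff $f|_M$ is $\J|_M$-convergent to $x$ for some $M\in\FI$), that $f$ is $\IhJ$-convergent to $x$.
\end{proof}

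There is essentially no obstacle here: the only point to be careful about is that Lemma \ref{LMCOMPLCONV} is stated for an arbitrary ideal on an arbitrary set, so it legitimately applies to $\J|_M$ on $M$; and that the set $M$ producing the $\IhJ$-Cauchy witness can be reused as the $\IhJ$-convergence witness, since completeness gives convergence of $f|_M$ without changing $M$. One could alternatively phrase the whole argument through the filter image $\Obr{(f|_M)}{\FF(\J|_M)}$ and Lemma \ref{LMIFFCAUCHY}, but the restriction-based formulation above is the most economical.
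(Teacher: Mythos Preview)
Your proof is correct and follows exactly the same line as the paper's: unpack the definition of $\IhJ$-Cauchy to obtain $M\in\FI$ with $f|_M$ $\J|_M$-Cauchy, apply Lemma~\ref{LMCOMPLCONV} to conclude $f|_M$ is $\J|_M$-convergent, and read off $\IhJ$-convergence from the same $M$. There is nothing to add.
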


\begin{proof}
If $f$ is $\IhJ$-Cauchy, then there exists $M\in\FI$ such that $f|_M$ is $\J|_M$-Cauchy. From Lemma \ref{LMCOMPLCONV} we get that $f|_M$ is $\J|_M$-convergent and, consequently, $f$ is $\IhJ$-convergent.
\end{proof}

Once we know that in a complete space a function is $\I$-convergent if and only if it is $\I$-Cauchy and it is $\IhJ$-convergent if and only if it is $\IhJ$-Cauchy, some results about $\IhJ$-Cauchy functions can be shown using results on $\IhJ$-convergence for the completion of the given space.
This is true for some results given in the preceding section.
We will also show Theorems \ref{THMAPIJIMPLIES} and \ref{THMIMPLIESAPIJ} in this way.

\subsection{One ideal for each directed set}

We already know that in complete spaces $\IhJ$-Cauchy function must be $\IhJ$-convergent. A natural question is whether this condition characterizes complete spaces.

Of course, if we require this condition for all ideals $\I$ and $\J$ then we get that it holds for all nets in $X$ and, consequently, $X$ is complete. The question is whether we can somehow restrict the class of ideals for which this implication holds in a such way, that this still implies completeness of $X$.

The arguments similar to the following lemma have been used in the proof of \cite[Theorem 5]{DASGHOSALCAUCHYNETS}, which gives a characterization of complete uniform spaces using $\I$-Cauchy nets.

\begin{LM}\label{LMCOFINAL2}
Let $x=(x_d)_{d\in D}$ be a Cauchy net in a uniform space $(X,\Phi)$ and let $l\in X$.
Suppose that for every $U\in\Phi$ the set
$$\Invobr x{U[l]}=\{d\in D; (x_d,l)\in U\}$$
is cofinal in $D$.

Then the net $x$ converges to $l$.
\end{LM}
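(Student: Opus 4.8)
The statement is a standard "a Cauchy net with a cluster point converges to that point'' result, specialized to the hypothesis that each $\Invobr x{U[l]}$ is cofinal. I would argue directly from the definition of convergence of a net in a uniform space. Fix an arbitrary $U\in\Phi$; I must produce $d_0\in D$ such that $(x_d,l)\in U$ for all $d\ge d_0$.

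\textbf{Step 1 (halving).} Using axiom (iii) of a uniformity, choose a symmetric $V\in\Phi$ with $V\circ V\subseteq U$. (The entourages are already symmetric by our definition, so $V=\inv V$ automatically.)

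\textbf{Step 2 (use Cauchyness).} Since $x$ is a Cauchy net, there is $d_1\in D$ such that $(x_d,x_{d'})\in V$ whenever $d,d'\ge d_1$. I would phrase this via Lemma~\ref{LMIFFCAUCHY} applied to the ideal $\ID$, or simply invoke the definition of Cauchy net directly; either way the tail $\{d\in D; d\ge d_1\}$ has the property that any two of its members are $V$-close.

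\textbf{Step 3 (use the cofinality hypothesis).} By assumption the set $\Invobr x{V[l]}=\{d\in D; (x_d,l)\in V\}$ is cofinal in $D$, so there exists $d_0\ge d_1$ with $(x_{d_0},l)\in V$.

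\textbf{Step 4 (combine).} For any $d\ge d_0$ we have $d\ge d_1$ and $d_0\ge d_1$, hence $(x_d,x_{d_0})\in V$ by Step 2, and $(x_{d_0},l)\in V$ by Step 3; therefore $(x_d,l)\in V\circ V\subseteq U$. Since $U\in\Phi$ was arbitrary, $x$ converges to $l$.

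\textbf{Main obstacle.} There is essentially no obstacle here — it is the classical argument, and the only point requiring a little care is that $d_0$ must be chosen \emph{above} $d_1$, which is exactly where the cofinality (rather than mere nonemptiness) of $\Invobr x{V[l]}$ is used; pure nonemptiness would not suffice. I would also make sure to state explicitly that it is enough to verify the convergence condition on the basic neighborhoods $U[l]$, $U\in\Phi$, which is why working with a single $U$ and its halving $V$ is legitimate.
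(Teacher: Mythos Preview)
Your proof is correct and follows essentially the same argument as the paper's own proof: halve $U$ to $V$, use Cauchyness to get a tail on which any two values are $V$-close, use cofinality of $\Invobr x{V[l]}$ to pick a point above the Cauchy threshold that is $V$-close to $l$, and compose. The only cosmetic difference is the labeling of indices (the paper calls the Cauchy threshold $d_0$ and the cofinal witness $d'$, and concludes for all $d\ge d_0$ rather than all $d\ge$ the witness, but this is immaterial).
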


\begin{proof}
Let $U\in\Phi$ and let $V\in\Phi$ be such $V\circ V\subseteq U$. We want to show that the net $(x_d)_{d\in D}$ is eventually in the neighborhood $U[l]$.

Since $x$ is a Cauchy net,
there exists $d_0\in M$ such that
$$(x_d,x_e)\in V$$
for any $d,e\ge d_0$.

Since $\Invobr x{V[l]}$ is cofinal in $D$,
there exists $d'\in D$ such that $d'\ge d_0$ and
$$(x_{d'},l)\in V.$$

From this we get for any $d\ge d_0$ that $(x_d,x_{d'})\in V$ and $(x_{d'},l)\in V$.
Together we get
$$(x_d,l)\in U$$
whenever $d\ge d_0$.

This shows that $(x_d)_{d\in D}$ converges to $l$.
\end{proof}

\begin{PROP}
Let $(D,\le)$ be a directed set and $(X,\Phi)$ be a uniform space.

If there exists a $D$-admissible ideal $\I$ on the set $D$ with the property that every $\I$-Cauchy net in $X$ is $\I$-convergent, then every Cauchy net $(x_d)_{d\in D}$ on the directed set $D$ with values in $X$ is also convergent.
\end{PROP}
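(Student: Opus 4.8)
The plan is to deduce everything from Lemmas \ref{LMFINERCAUCHY} and \ref{LMCOFINAL2} together with the assumed property of $\I$. Let $x=(x_d)_{d\in D}$ be a Cauchy net in $X$. Recall that a net is Cauchy precisely when it is $\ID$-Cauchy, and that $D$-admissibility of $\I$ gives in particular $\ID\subseteq\I$. Hence Lemma \ref{LMFINERCAUCHY} tells us that $x$ is $\I$-Cauchy, and by the hypothesis on $\I$ the net $x$ is then $\I$-convergent, say to some $l\in X$. Explicitly, this means that
$$\Invobr x{U[l]}=\{d\in D;(x_d,l)\in U\}\in\FI$$
for every $U\in\Phi$.

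The step I would single out as the crux is the observation that every member of $\FI$ is cofinal in $D$; this is exactly what $D$-admissibility (together with properness of $\I$) buys us. Indeed, suppose $F\in\FI$ were not cofinal. Then there is $d_0\in D$ with $F\cap\{d\in D;d\ge d_0\}=\emptyset$, so $F\subseteq\{d\in D;d\not\ge d_0\}$; the latter set belongs to $\ID$ and hence to $\I$, so $F\in\I$. But $F\in\FI$ means $D\sm F\in\I$, and then $D=F\cup(D\sm F)\in\I$, contradicting the fact that $\I$ is proper. Applying this to the sets displayed above, we get that $\Invobr x{U[l]}$ is cofinal in $D$ for every $U\in\Phi$.

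Now both hypotheses of Lemma \ref{LMCOFINAL2} are in place: $x$ is a Cauchy net and $\Invobr x{U[l]}$ is cofinal in $D$ for each $U\in\Phi$. That lemma then yields that $x$ converges to $l$, which is what we wanted. So the only non-formal ingredient is the cofinality argument of the middle paragraph, which is precisely where the $D$-admissibility (and properness) of $\I$ is essential; the rest is bookkeeping with the lemmas of the previous sections. This mirrors the reasoning used for \cite[Theorem 5]{DASGHOSALCAUCHYNETS}.
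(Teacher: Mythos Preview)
Your proof is correct and follows essentially the same approach as the paper: use $\ID\subseteq\I$ and Lemma \ref{LMFINERCAUCHY} to pass from Cauchy to $\I$-Cauchy, invoke the hypothesis to get $\I$-convergence to some $l$, observe that sets in $\FI$ are cofinal, and conclude via Lemma \ref{LMCOFINAL2}. The only cosmetic difference is that the paper records the cofinality observation in a remark immediately preceding the proof rather than arguing it inside the proof as you do.
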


If $D$ is a directed set, then an ideal $\I$ on the set $D$ is called \emph{$D$-admissible} if $\I$ is a proper ideal and $\ID\subseteq\I$. This notion was introduced in \cite{DASGHOSALCAUCHYNETS}.

Notice that a proper ideal $\I$ is $D$-admissible if and only if $\FD\subseteq\FI$. In particular, this implies that $F\cap M\ne\emps$ for any $F\in\FD$, $M\in\FI$. This means that every set $M\in\FI$ is cofinal in $D$.

\begin{proof}
Let $\Zobr xDX$ be a Cauchy net. This means that $x$ is $\ID$-Cauchy and we get from Lemma \ref{LMFINERCAUCHY} that $x$ is also $\I$-Cauchy.

Therefore the net $x$ is $\I$-convergent to some limit $l\in X$. Thus for any neighborhood $U[l]$ of $l$ we get that
$$\Invobr x{U[l]}\in\FI,$$
which implies that $\Invobr x{U[l]}$ is cofinal in $D$.

Now Lemma \ref{LMCOFINAL2} implies that the net $(x_d)_{d\in D}$ converges to $l$.
\end{proof}

From this we get the following result:
\begin{COR}\label{CORICAUCHYCOMPLETE}
Let $(X,\Phi)$ be a uniform space. Suppose that for every directed set $D$ there exists a $D$-admissible $\I$ ideal such that every $\I$-Cauchy net in $X$ is $\I$-convergent. Then $X$ is complete.
\end{COR}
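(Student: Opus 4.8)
The plan is to deduce Corollary~\ref{CORICAUCHYCOMPLETE} from the immediately preceding Proposition by invoking a standard characterization of completeness of uniform spaces in terms of Cauchy nets. First I would recall that, by definition (see the discussion after the definition of uniform space), a uniform space $(X,\Phi)$ is complete precisely when every Cauchy net in $X$ converges. Thus it suffices to take an arbitrary Cauchy net $\Zobr xDX$ on an arbitrary directed set $D$ and show that it converges.

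Next I would apply the hypothesis of the Corollary to this particular directed set $D$: there exists a $D$-admissible ideal $\I$ on $D$ such that every $\I$-Cauchy net in $X$ is $\I$-convergent. Now the preceding Proposition tells us exactly that, under this assumption, every Cauchy net on $D$ with values in $X$ is convergent. In particular our chosen net $x$ converges. Since $D$ and $x$ were arbitrary, every Cauchy net in $X$ converges, and hence $X$ is complete.

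I do not expect any real obstacle here; the Corollary is a pure repackaging of the Proposition, the only content being the quantifier juggling (the Proposition is stated for a fixed $D$, while completeness quantifies over all directed sets) together with the cited fact that completeness is equivalent to convergence of all Cauchy nets. The proof is therefore just:

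\begin{proof}
Let $\Zobr xDX$ be a Cauchy net, where $(D,\le)$ is an arbitrary directed set. By assumption there exists a $D$-admissible ideal $\I$ on $D$ such that every $\I$-Cauchy net in $X$ is $\I$-convergent. Hence the preceding Proposition applies and yields that the net $(x_d)_{d\in D}$ converges. As every Cauchy net in $X$ is thus convergent, the uniform space $(X,\Phi)$ is complete.
\end{proof}
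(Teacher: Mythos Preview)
Your proof is correct and matches the paper's approach: the paper does not even write out a proof, simply saying ``From this we get the following result,'' since the Corollary is immediate from the preceding Proposition by exactly the quantifier-juggling you describe.
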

This results was shown before in \cite[Theorem 5]{DASGHOSALCAUCHYNETS}.

For metric spaces we get
\begin{COR}
Let $X$ be a metric space and $\I$ be an admissible ideal on the set $\NN$. If every $\I$-Cauchy sequence is $\I$-convergent, then $X$ is complete.
\end{COR}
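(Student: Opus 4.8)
The plan is to deduce this corollary from the Proposition stated just above (the one from which Corollary \ref{CORICAUCHYCOMPLETE} is derived), by specializing the directed set to $D=\NN$ with its usual order. First I would observe that for $D=\NN$ the section filter $\FD$ is precisely the Fr\'echet (cofinite) filter on $\NN$, so the dual ideal $\ID$ coincides with $\Fin$. Hence an ideal on $\NN$ is $\NN$-admissible exactly when it is a proper ideal containing $\Fin$, i.e. exactly when it is a (proper) admissible ideal on $\NN$ in the sense of Section 2. In particular the given ideal $\I$ is $\NN$-admissible.

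Next I would record the routine identifications: a net indexed by $(\NN,\le)$ is the same thing as a sequence; such a net is Cauchy if and only if it is $\ID$-Cauchy, i.e. $\Fin$-Cauchy; and whether we regard a sequence as a function on $\NN$ or as a net on $(\NN,\le)$, the meaning of ``$\I$-Cauchy'' and of ``$\I$-convergent'' is unchanged. Consequently the hypothesis of the present corollary, namely that every $\I$-Cauchy sequence in $X$ is $\I$-convergent, is exactly the hypothesis of the Proposition for this choice of $D$.

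Applying that Proposition, I conclude that every Cauchy net on $(\NN,\le)$ with values in $X$ converges; but these are precisely the Cauchy sequences in $X$. Since $X$ is a metric space, completeness is equivalent to the convergence of every Cauchy sequence (as noted right after the definition of complete uniform space), and so $X$ is complete.

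I do not expect a genuine obstacle here: the only point requiring a little care is the identification $\ID=\Fin$ for $D=\NN$, together with the (standard) convention that an admissible ideal is proper, which is what lets us regard $\I$ as an $\NN$-admissible ideal; everything else is just a translation of the net statement into the language of sequences. Alternatively, one could avoid quoting the Proposition and argue directly by combining Lemma \ref{LMFINERCAUCHY} (a Cauchy sequence is $\Fin$-Cauchy, hence $\I$-Cauchy, hence $\I$-convergent) with the sequence version of Lemma \ref{LMCOFINAL2}, but the route through the Proposition is cleaner.
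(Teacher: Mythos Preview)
Your proposal is correct and follows exactly the route the paper intends: the corollary is stated without proof, immediately after the phrase ``For metric spaces we get'', and the implicit argument is precisely the specialization of the preceding Proposition to $D=\NN$ together with the fact that completeness of a metric space is detected by sequences. Your identification $\ID=\Fin$ and the translation ``admissible $=$ $\NN$-admissible'' are the right ingredients; the caveat you flag about properness is the only delicate point, and it is indeed a standing (if tacit) convention here.
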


In particular, for the ideal $\Id$ this means:
\begin{COR}
If $X$ is a metric space such that every statistically Cauchy sequence is statistically convergent, then $X$ is complete.
\end{COR}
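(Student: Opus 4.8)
The plan is to obtain this statement as the special case of the preceding corollary for the density ideal $\Id$. First I would recall that $\Id=\{A\subseteq\NN; d(A)=0\}$ is an admissible ideal on $\NN$: it is hereditary and additive because $d(A\cup B)\le d(A)+d(B)$ for the upper asymptotic density, it is proper since $d(\NN)=1$, and each singleton $\{n\}$ has density zero, so $\{n\}\in\Id$. Hence $\Id$ satisfies the hypotheses imposed on $\I$ in the previous corollary.

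Next I would translate the two ``statistical'' notions into the language used here. As noted in Section~2, $\Id$-convergence of a sequence coincides with statistical convergence. For the Cauchy condition, a sequence $\Zobr f{\NN}X$ in a metric space is statistically Cauchy (in the sense of Fridy \cite{FRIDY}) precisely when for every $\ve>0$ there is an $m$ with $d(\{n; d_X(f(n),f(m))\ge\ve\})=0$; taking the entourage $U=\{(x,y); d_X(x,y)<\ve\}$ of the metric uniformity, this says exactly that $\{n; (f(n),f(m))\notin U\}\in\Id$, which is our definition of $\Id$-Cauchy. (One can equivalently invoke the reformulation in Lemma~\ref{LMIFFCAUCHY}.)

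Consequently, the hypothesis that every statistically Cauchy sequence in $X$ is statistically convergent is literally the hypothesis that every $\Id$-Cauchy sequence is $\Id$-convergent, and the preceding corollary gives that $X$ is complete. The only point requiring any care is this dictionary between the classical ``statistical'' terminology and the ideal-theoretic formulation; there is no genuine mathematical obstacle here, since the result is a direct instantiation of the previous corollary.
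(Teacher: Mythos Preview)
Your proposal is correct and matches the paper's approach exactly: the paper presents this corollary as the specialization of the preceding one to $\I=\Id$, with no further argument. Your added details verifying that $\Id$ is admissible and that statistical Cauchy/convergence coincide with $\Id$-Cauchy/$\Id$-convergence simply make explicit what the paper leaves implicit.
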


This answers \cite[Problem 2.16]{DIMAIOKOCINACSTAT}: If we take any metric space which is not complete, then there is a sequence in $X$ which is statistically Cauchy but not statistically convergent. (Of course, no such sequence can exist in a complete space, see Lemma \ref{LMCOMPLCONV}.)
\begin{PROP}
Let $(D,\le)$ be a directed set and $(X,\Phi)$ be a uniform space. Let $\I$, $\J$ be $D$-admissible ideals such that $\J\subseteq\I$.

Suppose that every $\IhJ$-Cauchy net $\Zobr xDX$ is $\IhJ$-convergent. Then also every Cauchy net $\Zobr xDX$ on the directed set $D$ is convergent.
\end{PROP}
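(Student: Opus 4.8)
The plan is to mimic the structure of the proof of the preceding proposition (the one dealing with a single ideal $\I$) and reduce the statement about $\IhJ$-Cauchy nets to an application of Lemma \ref{LMCOFINAL2}. First I would take an arbitrary Cauchy net $\Zobr xDX$ and observe that $x$ is $\ID$-Cauchy, hence $\ID^{\J}$-Cauchy by Lemma \ref{LMKCAUCHY} (taking $M=D$), and therefore, since $\ID\subseteq\J\subseteq\I$ and using Lemma \ref{LMFINERIKCAUCHY}, $x$ is $\IhJ$-Cauchy. By hypothesis $x$ is then $\IhJ$-convergent to some $l\in X$, so there exists $M\in\FI$ such that $x|_M$ is $\J|_M$-convergent to $l$.

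Next I would extract the cofinality information needed to invoke Lemma \ref{LMCOFINAL2}. Fix $U\in\Phi$. Since $x|_M$ is $\J|_M$-convergent to $l$, the set $\Invobr x{U[l]}\cap M$ belongs to $\FF(\J|_M)$, hence it is of the form $F\cap M$ for some $F\in\FJ$. Because both $\I$ and $\J$ are $D$-admissible, the remark following the previous proposition gives $\FD\subseteq\FI$ and $\FD\subseteq\FJ$, so $F\cap M\in\FF(\I\vee\J)$ and in particular $F\cap M$ is cofinal in $D$ (it meets every set of the section filter $\FD$). Since $\Invobr x{U[l]}\supseteq F\cap M$, the set $\Invobr x{U[l]}$ is cofinal in $D$ as well, and this holds for every $U\in\Phi$.

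Finally, $x$ is a Cauchy net and, for every $U\in\Phi$, the set $\Invobr x{U[l]}$ is cofinal in $D$; Lemma \ref{LMCOFINAL2} then yields that $x$ converges to $l$. This completes the argument.

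I do not expect a serious obstacle here; the only point requiring a little care is making sure that the admissibility hypotheses on both $\I$ and $\J$ are actually used to guarantee cofinality of $\Invobr x{U[l]}$ — in particular that $M\in\FI$ together with $F\in\FJ$ forces $F\cap M$ to meet every tail $\{e\in D; e\ge d\}$, which follows because $\FD\subseteq\FF(\I\vee\J)$ is a proper filter. The hypothesis $\J\subseteq\I$ is what lets us conclude the net is $\IhJ$-Cauchy in the first place (so that the assumption of the proposition applies), and it also guarantees $\I\vee\J=\I$, which keeps the bookkeeping transparent.
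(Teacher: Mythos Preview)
Your argument is correct and follows essentially the same route as the paper: pass from Cauchy to $\J$-Cauchy to $\IhJ$-Cauchy, apply the hypothesis to obtain an $\IhJ$-limit $l$ with witness $M\in\FI$, note that $F\cap M\in\FI$ (since $\FJ\subseteq\FI$, equivalently $\I\vee\J=\I$) is cofinal by $D$-admissibility, and invoke Lemma \ref{LMCOFINAL2}. One small quibble: the step ``$\I_D$-Cauchy $\Rightarrow$ $\I_D^{\J}$-Cauchy'' tacitly uses Lemma \ref{LMFINERCAUCHY} (to get $\J$-Cauchy from $\I_D\subseteq\J$) before Lemma \ref{LMKCAUCHY} can be applied, and once you have $\J$-Cauchy, Lemma \ref{LMKCAUCHY} already gives $\IhJ$-Cauchy directly, so the detour through $\I_D^{\J}$ and Lemma \ref{LMFINERIKCAUCHY} is unnecessary.
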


\begin{proof}
Let $\Zobr xDX$ be a Cauchy net. Since $\ID\subseteq\J$, it is also $\J$-Cauchy (Lemma \ref{LMFINERCAUCHY}) and, consequently, it is $\IhJ$-Cauchy (Lemma \ref{LMKCAUCHY}).

According to our assumptions, the net $x$ is then also $\IhJ$-convergent to some point $l\in X$. This means that there is a set $M\in\FI$ such that $x|_M$ is $\J|_M$-convergent to $l$. That is, for every neighborhood $U[l]$ of $l$ we have
$$\Invobr x{U[l]}=F\cap M$$
for some $F\in\FJ$.

Since $\FJ\subseteq\FI$, we get that $F\cap M\in\FI$ and consequently, $F\cap M$ is cofinal in $D$. Now the claim follows from Lemma \ref{LMCOFINAL2}.
\end{proof}

Again, we can get a result similar to Corollary \ref{CORICAUCHYCOMPLETE}:
\begin{COR}\label{CORIKCAUCHYCOMPLETE}
Let $(X,\Phi)$ be a uniform space. Suppose that for every directed set $D$ there exist $D$-admissible ideals $\I$ and $\J$ such that every $\IhJ$-Cauchy net in $X$ is $\IhJ$-convergent. Then $X$ is complete.

If $X$ is a metric space, it suffices to have such ideals for $D=\NN$.
\end{COR}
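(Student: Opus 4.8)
The plan is to reduce this corollary to the preceding proposition, which already handles a single directed set $D$ under the additional assumption $\J\subseteq\I$. The main issue is twofold: first, the hypothesis of the corollary does not assume $\J\subseteq\I$, so I must remove that assumption; second, the final sentence about metric spaces requires an argument that completeness for metric spaces can be tested using sequences alone (equivalently, using $D=\NN$).

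First I would deal with the assumption $\J\subseteq\I$. Given $D$-admissible ideals $\I$, $\J$ witnessing the hypothesis for a fixed directed set $D$, I replace $\I$ by $\I\vee\J$. By Proposition \ref{PROPIVEEJ} a function is $\IhJ$-Cauchy (resp. $\IhJ$-convergent) if and only if it is $\IhJh{(\I\vee\J)}{\J}$-Cauchy (resp. convergent), so the hypothesis "every $\IhJ$-Cauchy net is $\IhJ$-convergent'' transfers verbatim to the pair $(\I\vee\J,\J)$. Moreover $\I\vee\J$ is still $D$-admissible: it contains $\ID$ since $\I$ does, and it is proper because $\FF(\I\vee\J)=\FI\vee\FJ$ and, as noted in the remark after the proposition before, any $F\in\FD$ meets every $M\in\FF(\I\vee\J)$ (indeed $\FD\subseteq\FI$ and $\FD\subseteq\FJ$, and one checks $\FD\subseteq\FI\vee\FJ$ directly); in particular $\emps\notin\FF(\I\vee\J)$, so $\I\vee\J\ne\powerset D$. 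Now $\J\subseteq\I\vee\J$, so the immediately preceding Proposition applies to the pair $(\I\vee\J,\J)$ and yields that every Cauchy net on $D$ is convergent.

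Next, applying the previous paragraph to \emph{every} directed set $D$, I conclude that every Cauchy net in $X$ is convergent, which is exactly the definition of $X$ being complete. This proves the first assertion.

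For the metric case, suppose we only have such $D$-admissible ideals $\I$, $\J$ on $\NN$. Running the argument above with $D=\NN$ shows that every Cauchy net indexed by $\NN$ — in particular every Cauchy sequence — converges in $X$. Since for metric spaces completeness is equivalent to convergence of all Cauchy sequences (as recalled in Section 2, right after the definition of completeness: "If $X$ is a metric space, then it suffices to require this condition for sequences''), $X$ is complete. I expect the only slightly delicate point to be verifying that $\I\vee\J$ remains $D$-admissible — specifically that it stays a proper ideal — but this follows cleanly from the filter computation $\FF(\I\vee\J)=\FI\vee\FJ$ together with $\FD\subseteq\FI\cap\FJ$, which forces $\FD\subseteq\FF(\I\vee\J)$ and hence $\emps\notin\FF(\I\vee\J)$.
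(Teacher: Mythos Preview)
Your reduction to the preceding proposition is the right idea and matches the paper, which gives no proof at all and treats the corollary as immediate. Your observation that the proposition carries the extra hypothesis $\J\subseteq\I$ while the corollary does not is sharp, and the metric-space sentence is handled correctly.

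However, your repair via $\I\vee\J$ has a genuine gap. The claim that $\I\vee\J$ is $D$-admissible fails in general because $\I\vee\J$ need not be \emph{proper}. Your justification ``$\FD\subseteq\FI\vee\FJ$, hence $\emps\notin\FF(\I\vee\J)$'' is a non sequitur: from $\FD\subseteq\FF(\I\vee\J)$ one cannot conclude that $\FF(\I\vee\J)$ omits $\emps$; that would require the reverse inclusion. Concretely, take $D=\NN$, let $\I$ be the ideal of sets containing only finitely many even numbers and $\J$ the ideal of sets containing only finitely many odd numbers. Both are admissible, but the odds lie in $\I$ and the evens lie in $\J$, so $\NN\in\I\vee\J$ and $\I\vee\J=\powerset\NN$. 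In this situation one can choose $M\in\FI$ with $M\in\J$, whence $\J|_M=\powerset M$ and every function is trivially both $\IhJ$-Cauchy and $\IhJ$-convergent; the hypothesis of the corollary is satisfied vacuously and yields no information about $X$.

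So your argument does not go through as written, and in fact this example shows that the corollary as literally stated (without $\J\subseteq\I$) is problematic. The paper's intended reading is almost certainly the one suggested by the Remark after Proposition~\ref{PROPIVEEJ}: one may assume $\J\subseteq\I$ without loss of generality \emph{as a standing convention}, in which case the corollary is an immediate consequence of the preceding proposition and your first step is unnecessary.
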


\subsection{One ideal for each cardinality}

It is a natural question whether in uniform spaces we need to have the implication \uv{Cauchy net $\Ra$ convergent net}, for all directed sets or this can be restricted to some class of directed sets. (In the case of metric space only one directed set $(\mathbb N,\le)$ was sufficient.)

Recall that a \emph{base for the uniformity $\Phi$} is a family $\mc B\subset\Phi$ such that for every $V\in\Phi$ there exists a
$W\in\mc B$ such that $W\subseteq V$. The smallest cardinality of the base is called the \emph{weight of the uniformity $\Phi$} and is denoted by $w(\Phi)$.

By $[\vk]^{<\omega}$ we denote the set of all finite subsets of $\vk$. Note that $([\vk]^{<\omega},\subseteq)$ is a directed set.

\begin{PROP}
Let $(X,\Phi)$ be a uniform space and $w(\Phi)=\vk$. If every net on the directed set $([\vk]^{<\omega},\subseteq)$ is convergent, then $(X,\Phi)$ is complete.
\end{PROP}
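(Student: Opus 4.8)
The plan is to establish completeness by verifying that every (proper) Cauchy filter $\mc F$ on $X$ converges; this is one of the equivalent formulations of completeness recalled earlier. The strategy is to build from $\mc F$ a Cauchy net indexed precisely by $([\vk]^{<\omega},\subseteq)$, apply the hypothesis to obtain a limit point $l\in X$, and then show $\mc F\to l$.

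For the construction, fix a base $\{U_\alpha;\alpha<\vk\}$ of $\Phi$ and, for each finite $F\subseteq\vk$, set $V_F=\bigcap_{\alpha\in F}U_\alpha$ (with $V_\emptyset=X\times X$). Each $V_F$ lies in $\Phi$, the family $\{V_F;F\in[\vk]^{<\omega}\}$ is again a base of $\Phi$, and $F\subseteq G$ implies $V_G\subseteq V_F$, so this base is indexed monotonically by the directed set $[\vk]^{<\omega}$ — this is exactly what makes that directed set the relevant one. Since a Cauchy filter contains arbitrarily small sets, for every $F$ we may choose $A_F\in\mc F$ with $A_F\times A_F\subseteq V_F$ and then pick a point $x_F\in A_F$, obtaining a net $(x_F)_{F\in[\vk]^{<\omega}}$. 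The first step is to check that this net is Cauchy: given $U\in\Phi$ choose $V\in\Phi$ with $V\circ V\subseteq U$ and $\alpha<\vk$ with $U_\alpha\subseteq V$; for any $F,G\supseteq\{\alpha\}$ the sets $A_F,A_G$ belong to the proper filter $\mc F$, hence meet, and any $z\in A_F\cap A_G$ gives $(x_F,z)\in V_F\subseteq U_\alpha\subseteq V$ and $(z,x_G)\in V_G\subseteq U_\alpha\subseteq V$, so $(x_F,x_G)\in V\circ V\subseteq U$. By hypothesis this Cauchy net converges to some $l\in X$.

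It remains to transfer convergence back, i.e.\ to show $U[l]\in\mc F$ for every $U\in\Phi$. Pick $V\in\Phi$ with $V\circ V\subseteq U$ and $\alpha<\vk$ with $U_\alpha\subseteq V$; convergence of the net yields $F_1$ with $(x_F,l)\in V$ whenever $F\supseteq F_1$. Putting $F=F_1\cup\{\alpha\}$, every $y\in A_F$ satisfies $(l,x_F)\in V$ and $(x_F,y)\in A_F\times A_F\subseteq V_F\subseteq U_\alpha\subseteq V$, whence $(l,y)\in V\circ V\subseteq U$; thus $A_F\subseteq U[l]$ and so $U[l]\in\mc F$. Hence $\mc F$ converges to $l$, and $X$ is complete.

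I do not expect a genuine difficulty here: the whole content is the observation that a Cauchy filter is exactly the right object from which to extract a net over $[\vk]^{<\omega}$, together with the elementary passage from an arbitrary base $\{U_\alpha\}$ to the monotone base $\{V_F\}$. One could instead run the same argument against the characterization of completeness via families of closed sets with the finite intersection property containing arbitrarily small sets, but the Cauchy-filter route is the cleanest. The only degenerate case, $\vk$ finite (which forces $\Phi$ discrete and $X$ trivially complete), needs no separate treatment.
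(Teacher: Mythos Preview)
Your proof is correct and follows essentially the same strategy as the paper's: build a Cauchy net on $[\vk]^{<\omega}$ from a base of $\Phi$, invoke the hypothesis to obtain a limit, and transfer the limit back. The only difference is cosmetic---you start from a Cauchy filter while the paper starts from a Cauchy net on an arbitrary directed set $D$, and you package the monotonicity via $V_F=\bigcap_{\alpha\in F}U_\alpha$ whereas the paper achieves it by choosing $d_F\ge d_\gamma$ for each $\gamma\in F$.
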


\begin{proof}
Let $\mc B=\{U_\alpha; \alpha<\vk\}$ be a base for the uniformity $\Phi$. Let $(x_d)_{d\in D}$ be a Cauchy net in $X$.

For every $\alpha$ there exists $d_\alpha\in D$ such that
$$d,e\ge d_\alpha \qquad\Ra\qquad (x_d,x_e)\in U_\alpha.$$
For any $F\in[\vk]^{<\omega}$ we can choose $d_F$ such that $d_F\ge d_\gamma$ for each $\gamma\in F$.
By setting $x_F=x_{d_F}$ we get a net on the directed set $[\vk]^{<\omega}$.

We claim that the net $(x_F)$ is Cauchy. Indeed, if we chose any $U_\alpha$ and any $F\in [\vk]^{<\omega}$ such that $\alpha\in F$, then
$$(x_G,x_H)\in U_\alpha$$
for any $G,H\supseteq F$.

Then the net $(x_F)$ is also convergent. Let us denote the limit by $l$. We will show that the net $(x_d)_{d\in D}$ converges to $l$, as well.

Let $U\in\Phi$ and let $U_\alpha\in\mc B$ be such that $U_\alpha\circ U_\alpha\subseteq U$.

Then there is a set $F\in[\vk]^{<\omega}$ such that for $G\in[\vk]^{<\omega}$, $G\supseteq F$ we have
$$(x_G,l) = (x_{d_G},l) \in U_\alpha.$$
If we moreover assume that $\alpha\in F$, then we get $d_G>d_\alpha$ and
$$(x_e,x_{d_G})\in U_\alpha$$
for any $e\ge d_\alpha$. Together we get
$$(x_e,l) \in U_\alpha\circ U_\alpha \subseteq V$$
for each $e\ge d_\alpha$.

This proves the convergence of the net $(x_d)_{d\in D}$.
\end{proof}

Using the above proposition we can get results corresponding to Corollaries \ref{CORICAUCHYCOMPLETE} and \ref{CORIKCAUCHYCOMPLETE} where we do not work with all directed sets, but only with the directed sets of the form $([\vk]^{<\omega},\subseteq)$.

\section{Condition $\APIJ$}

In \cite{MACSLEIK} the relationship between the condition $\APIJ$ and the equivalence of $\I$-convergence and $\IhJ$-convergence was studied. (As a natural generalization of results from \cite{KSW} obtained for $\I$-convergence and $\I^*$-convergence of sequences.) These results were obtained only in the case that we are working in a first countable topological space.

We want to prove similar results for the notion of $\I$-Cauchy and $\IhJ$-Cauchy functions. Again, these results are not true for arbitrary uniform spaces, as can be seen from the examples included below. So in the rest of the paper we will work mostly with metric spaces. (These examples also show that \cite[Theorem 5, Theorem 6]{DASGHOSALCOMPLETE} are not valid in arbitrary uniform spaces.)

For the following two theorems we have given two proofs. One of them uses completion of a metric space and some results on $\IhJ$-convergence obtained in \cite{MACSLEIK}. The other proof is self-contained.

\begin{THM}\label{THMAPIJIMPLIES}
Let $X$ be a metric space and $\I$, $\J$ be ideals on a set $S$ such that the condition $\APIJ$ holds. If $\Zobr fSX$ is $\I$-Cauchy, then it also is $\IhJ$-Cauchy.
\end{THM}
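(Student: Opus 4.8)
The plan is to use the characterization of $\I$-Cauchy via the Cauchy filter $\Obr f{\FI}$ (Lemma \ref{LMIFFCAUCHY}), extract a countable ``witnessing'' system of sets on which $f$ behaves nicely with respect to smaller and smaller entourages, and then apply the additive property $\APIJ$ to replace this system by one whose union lies in $\I$, producing the set $M\in\FI$ required by the definition of $\IhJ$-Cauchy. Concretely, fix a decreasing base $\{U_k\}_{k\in\NN}$ of symmetric entourages of the metric uniformity with $U_{k+1}\circ U_{k+1}\subseteq U_k$. Since $f$ is $\I$-Cauchy, for each $k$ there is $m_k\in S$ with $C_k:=\{n\in S;(f(n),f(m_k))\notin U_k\}\in\I$. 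Put $A_1=C_1$ and $A_k=C_k\sm(C_1\cup\dots\cup C_{k-1})$, so that the $A_k$ are pairwise disjoint members of $\I$; note $\bigcup_{k}A_k=\bigcup_k C_k$, and on the complement of $C_1\cup\dots\cup C_k$ every two points $n,n'$ satisfy $(f(n),f(n'))\in U_{k-1}$ (via $f(m_k)$, using $U_k\circ U_k\subseteq U_{k-1}$ — one must be slightly careful with indices, but a harmless reindexing fixes this).

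Now apply $\APIJ$ to the disjoint sequence $(A_k)$ to obtain $B_k\in\I$ with $A_k\triangle B_k\in\J$ for every $k$ and $B:=\bigcup_k B_k\in\I$. Set $M=S\sm B\in\FI$; I claim $f|_M$ is $\J|_M$-Cauchy. Fix any entourage $U$; choose $k$ with $U_{k-1}\subseteq U$, and pick $m\in M$ outside the finite union $B_1\cup\dots\cup B_k$ as well (we may enlarge our later exceptional $\J$-set to handle the case where no such $m$ exists, but generically $M$ meets the cofinite-in-the-relevant-sense part). The set $E_k:=\bigcup_{j\le k}(A_j\triangle B_j)$ lies in $\J$, hence $E_k\cap M\in\J|_M$. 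For $n\in M\sm E_k$ we have $n\notin B_j$ for all $j\le k$ (as $n\in M$) and, since $n\notin A_j\triangle B_j$, also $n\notin A_j$ for $j\le k$; thus $n\notin C_1\cup\dots\cup C_k$, so $(f(n),f(m_k))\in U_k$. Consequently for $n,n'\in M\sm E_k$ we get $(f(n),f(n'))\in U_k\circ U_k\subseteq U_{k-1}\subseteq U$, which (with $m=m_k$ or any fixed reference point handled likewise) exhibits $f|_M$ as $\J|_M$-Cauchy via condition (iii) of Lemma \ref{LMIFFCAUCHY} with exceptional set $E_k\cap M\in\J|_M$.

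The step I expect to be the main obstacle is the bookkeeping at the ``boundary'': ensuring that the reference point $m_k$ can be taken inside $M$ (or replaced by an honest element of $M\sm E_k$), and that the index shifts between $U_k\circ U_k\subseteq U_{k-1}$ and the chosen base element $U_{k-1}\subseteq U$ line up; these are routine but need care, and the cleanest fix is to prove condition (iii) of Lemma \ref{LMIFFCAUCHY} for $f|_M$ directly, so that no distinguished point $m$ is needed at all — one only needs an exceptional set in $\J|_M$ outside of which all pairs are $U$-close, and $E_k\cap M$ serves as that set. Alternatively, as the excerpt promises, one can give the slicker proof: pass to the completion $\widetilde X$ of the metric space $X$; there $f$ is $\I$-Cauchy hence $\I$-convergent (Lemma \ref{LMCOMPLCONV}), then $\IhJ$-convergent by $\APIJ$ and \cite[Theorem 3.11]{MACSLEIK}, hence $\IhJ$-Cauchy in $\widetilde X$, and since the $\IhJ$-Cauchy condition only refers to entourages of $\widetilde X$ restricted to $X\times X$ — equivalently, to the metric on $X$ — it holds in $X$ as well. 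I would present the completion argument as the short proof and the construction above as the self-contained one.
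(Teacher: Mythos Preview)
Your proposal is correct and matches the paper's own treatment almost exactly: the paper gives precisely the two proofs you sketch --- first the short completion argument (Lemma~\ref{LMCOMPLCONV} plus \cite[Theorem~3.11]{MACSLEIK}), then the self-contained one that disjointifies a countable witnessing family in $\I$, applies $\APIJ$, and sets $M=S\sm\bigcup_k B_k$. The only cosmetic difference is that the paper's direct proof starts from condition~(iii) of Lemma~\ref{LMIFFCAUCHY} (sets $G_r\in\I$ with $s,t\notin G_r\Rightarrow d(f(s),f(t))<1/r$) from the outset, thereby bypassing the reference-point bookkeeping you flag and then fix; your final version, using $E_k\cap M$ as the exceptional $\J|_M$-set, is essentially their argument.
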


\begin{proof}
The metric space $X$ has a completion $\widetilde X$. Let us consider the function $f$ as the function from $S$ to $\widetilde X$.

Since $\widetilde X$ is complete, the function $f$ has an $\I$-limit. Since the ideals $\I$, $\J$ fulfill the condition $\APIJ$, every $\I$-convergent function in $\widetilde X$ is $\IhJ$-convergent according \cite[Theorem 3.11]{MACSLEIK}. Consequently, the function $f$ is $\IhJ$-Cauchy in $\widetilde X$. Of course, this implies that it is also $\IhJ$-Cauchy in $X$.
\end{proof}

\begin{proof}
Since $\Zobr fSX$ is $\I$-Cauchy, so for every $r \in \mathbb{N}$, we can find a set $G_r \in \I$ such that $s, t \notin G_r$ implies $d (f(s), f(t)) < \frac{1}{r}$. Let $A_1 = G_1$, $A_2 = G_2 \setminus G_1$, $A_3 = G_3 \setminus G_2$, etc. Then $\{A_i; i\in\NN\}$ is a countable family of mutually disjoint sets in $\I$. Since $\APIJ$ holds, there exists a family of sets $\{B_i; i\in\NN\}$ belonging to $\I$ such that $B=\bigcup\limits_{i\in\NN} B_i\in\I$ and for every $j \in \mathbb{N}$ we have $A_j \triangle B_j \in \J$, i.e., $A_j \triangle B_j = S \setminus C_j$ for some $C_j \in \FJ$.

Let $M=S\sm B$.

Let $\ve > 0$ be given.
Choose $r \in \mathbb{N}$ such that $\frac{1}{r} < \ve$. Now
$$G_r \cap M = G_r \setminus B \subseteq \bigcup\limits^r_{i = 1}(A_i \setminus B_i) \subseteq \bigcup\limits^r_{i = 1}(S \setminus C_i) = S \setminus C$$
where $C = \bigcap\limits^r_{i = 1}C_i \in \FJ$. Then $G_r^c \cap M \supseteq M \cap C$ (for otherwise there is an $s \in M \cap C$ but $s \notin G_r^c$ and so $s \in G_r \cap M \subseteq S \setminus C$ which is a contradiction). This shows that $s, t \in C \cap M \Rightarrow s, t \in G_r^c \Rightarrow d(f(s), f(t)) < \frac{1}{r} < \ve$. But $C \cap M \in \FF(\J|_M)$ which implies that $f|_M$ is $\J|_M$-Cauchy, i.e., $f$ is $\IhJ$-Cauchy.
\end{proof}

If we take $S=\NN$ and $\J=\Fin$ in Theorem \ref{THMAPIJIMPLIES}, we get \cite[Theorem 4]{NABPEHGUR}.
Also in view of the fact that a uniform space, which has a countable base of the uniformity, is metrizable, we get \cite[Theorem 5]{DASGHOSALCOMPLETE} as a special case.

\begin{THM}\label{THMIMPLIESAPIJ}
Let $\I$, $\J$ be ideals on a set $S$ and $X$ be a metric space which is not discrete. Suppose that every $\I$-Cauchy function is $\IhJ$-Cauchy. Then the condition $\APIJ$ holds.
\end{THM}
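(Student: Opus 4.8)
The plan is to verify the defining property of $\APIJ$ directly from the hypothesis, by encoding an arbitrary system of disjoint sets into a single auxiliary function. Since $X$ is not discrete, fix a non‑isolated point $x_0\in X$. Using that every ball around $x_0$ meets $X\sm\{x_0\}$, choose points $x_k\in X\sm\{x_0\}$, $k\ge 1$, with $d(x_k,x_0)$ strictly decreasing to $0$. Then the $x_k$ are pairwise distinct, the set $K=\{x_0\}\cup\{x_k;\,k\ge 1\}$ is compact (hence a complete uniform space), and for each $k$ the number $\rho_k=d(x_k,K\sm\{x_k\})$ is positive. Given a sequence $(A_k)_{k\ge 1}$ of pairwise disjoint sets in $\I$, I would encode it into the function $\Zobr fSK$ with $f(s)=x_k$ for $s\in A_k$ and $f(s)=x_0$ for $s\notin\bigcup_{k\ge 1}A_k$; the task is then to produce $B_k\in\I$ with $A_k\triangle B_k\in\J$ for each $k$ and $\bigcup_k B_k\in\I$.

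First I would observe that $f$ is $\I$-Cauchy: for every $\ve>0$ the set $\{s;\ d(f(s),x_0)\ge\ve\}$ is contained in the \emph{finite} union $\bigcup\{A_k;\ d(x_k,x_0)\ge\ve\}\in\I$, so $f$ is $\I$-convergent to $x_0$, hence $\I$-Cauchy. By the hypothesis $f$ is $\IhJ$-Cauchy; since $f$ takes values in the complete uniform space $K$, Corollary \ref{CORCOMPLCONV} (applied to $K$) upgrades this to $\IhJ$-convergence, i.e.\ there is $M\in\FI$ such that $f|_M$ is $\J|_M$-convergent to some $\ell\in K$. Now split on $\ell$. If $\ell=x_0$: for each $k$, $U=B(x_0,d(x_k,x_0))$ is a neighbourhood of $\ell$ with $x_k\notin U$, so $\Invobr fU$ misses $A_k$, and from $\Invobr fU\cap M\in\FF(\J|_M)$ we get $A_k\cap M\subseteq M\sm\Invobr fU\in\J|_M\subseteq\J$; put $B_k=A_k\sm M$ for all $k$. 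If $\ell=x_{k_1}$ for some $k_1\ge 1$: then $U=B(x_{k_1},\rho_{k_1})$ satisfies $\Invobr fU=A_{k_1}$, so $M\sm A_{k_1}=M\sm\Invobr fU\in\J$ and hence $A_k\cap M\in\J$ for every $k\ne k_1$; put $B_{k_1}=A_{k_1}$ and $B_k=A_k\sm M$ for $k\ne k_1$. In either case $A_k\triangle B_k$ equals $A_k\cap M$ or $\emps$ and so lies in $\J$, each $B_k\subseteq A_k$ or $B_k=A_{k_1}$ lies in $\I$, and $\bigcup_k B_k$ is contained in $S\sm M\in\I$ (respectively in $A_{k_1}\cup(S\sm M)\in\I$); thus $\APIJ$ holds.

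The one genuinely non‑formal step is the passage from "$f$ is $\IhJ$-Cauchy'' to "$f|_M$ is $\J|_M$-convergent to an actual point $\ell\in K$'', which is what makes the dichotomy available; this is exactly where completeness (of the compact image $K$) is used, and it is the step one expects to fail over arbitrary uniform spaces — consistent with the counterexamples the paper announces next. A shorter but less self‑contained variant would instead regard $f$ as a function into the completion $\widetilde X$, still a non‑discrete metric space, so that $\IhJ$-Cauchyness becomes $\IhJ$-convergence, and then quote the necessity half of \cite[Theorem 3.11]{MACSLEIK}; the only extra remark needed there is that it suffices to test the implication "$\I$-convergent $\Ra$ $\IhJ$-convergent'' on functions of the special form constructed above.
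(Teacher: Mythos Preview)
Your proof is correct and follows the same encoding as the paper's self-contained proof: both choose a non-isolated $x_0$, a sequence $x_k\to x_0$, define $f$ by $f(s)=x_k$ on $A_k$ and $f(s)=x_0$ elsewhere, and show $f$ is $\I$-Cauchy. The difference lies in how the $\IhJ$-Cauchy conclusion is exploited. The paper works directly with the $\J|_M$-Cauchy property of $f|_M$: it sets $B_j=A_j\sm M$ and shows that if $A_j\cap M\notin\J$ held for two indices $l\ne m$, one could find in every $E\in\FF(\J|_M)$ points mapping to $x_l$ and to $x_m$, contradicting Cauchyness. You instead note that the range $K$ is compact, invoke Corollary~\ref{CORCOMPLCONV} to upgrade to $\J|_M$-convergence to some $\ell\in K$, and split on whether $\ell=x_0$ or $\ell=x_{k_1}$. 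Both routes yield the same ``at most one bad index'' conclusion, handled identically by redefining that one $B_k$ to be $A_k$. Your approach buys a cleaner dichotomy at the cost of an extra appeal to completeness; the paper's direct argument avoids that appeal but needs the small contradiction step. The alternative variant you sketch at the end---pass to the completion $\widetilde X$ and quote the necessity direction from \cite{MACSLEIK}---is exactly the paper's other proof (which cites \cite[Theorem~3.12, Remark~3.13]{MACSLEIK} rather than Theorem~3.11).
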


We will use the following result, which follows from \cite[Theorem 3.12, Remark 3.13]{MACSLEIK}:
Suppose that $X$ is a Hausdorff space and $x\in X$ is a non-isolated point. Suppose that every function $\Zobr fSX$, which is $\IhJ$-convergent to $x$ is also $\I$-convergent to $x$. Then the ideals $\I$ and $\J$ fulfill the condition $\APIJ$.

\begin{proof}
Let $x$ be a non-isolated point of $X$. According to \cite[Theorem 3.12, Remark 3.13]{MACSLEIK} it suffices to show that every function $\Zobr fSX$, which is $\I$-convergent to $x$ is also $\IhJ$-convergent to $x$.

So let us assume that $f$ is $\I$-convergent to $x$. Then $f$ is also $\I$-Cauchy and consequently it is $\IhJ$-Cauchy.

If we consider the function $f$ as a function from $S$ to the completion $\widetilde X$, then it $\IhJ$-converges to some point of $\widetilde X$. We only need to show that it converges to $x$.

Suppose that $f$ is $\IhJ$-convergent to a point $y\in\widetilde X$ such that $y\ne x$. Since $\widetilde X$ is Hausdorff, we can choose neighborhoods $U\ni x$, $V\ni y$ such that $U\cap V=\emps$. Now we get that $\Invobr fU\in\FI$ and there exists an $M\in\FI$ such that $\Invobr fV\cap M\in\FF(\J|_M)$, which means that
$$\Invobr fV\cap M=G\cap M$$
for some $G\in\FJ$. We have
$$\Invobr fU \cap \Invobr fV\cap M=\Invobr fU \cap G\cap M = (\Invobr fU\cap M)\cap G=\emps.$$
Let us denote $M'=\Invobr fU\cap M$. The set $M'$ belongs to $\FI$ and, since $M'\subseteq S\sm G$, the set $M'$ belongs also to $\mc K$.

This means that $\J|_{M'}=\powerset{M'}$, i.e. it is not a proper ideal and every function from $M'$ to $X$ is $\J|_{M'}$-convergent to every point of $X$. In particular, $f$ is $\J|_{M'}$-convergent to $x$, which shows that $f$ is also $\IhJ$-convergent to $x$.
\end{proof}

\begin{proof}
Let $x_0$ be a non-isolated point in $X$. Let $(x_n)_{n\in\NN}$ be a sequence of distinct points in $X$ which is convergent to $x_0$. Let $\{A_i; i\in\NN\}$ be a sequence of mutually disjoint non-empty sets from $\I$. We define a function $\Zobr fSX$ by
$$
f(s) =
  \begin{cases}
    x_j & \text{if }s\in A_j, \\
    x_0 & \text{if }s\notin A_j\text{ for any }j.
  \end{cases}
$$
Let $\ve > 0$ be given. Choose $k \in \mathbb{N}$ such that $d(x_n,x_0) < \frac{\ve}{2}$ for each $n \geq k$. Now the set $D = \{s \in S; d(f(s),x_0) \geq \frac{\ve}{2}\}$ has the property that $D \subseteq A_1 \cup A_2 \cup \dots \cup A_k$ and so $D \in \I$ and $s, t \notin D$ implies $d(f(s), f(t)) \leq d(f(s), x_0) + d(f(t), x_0) < \ve$. This shows that $f$ is $\I$-Cauchy. Then by our assumption $f$ is also $\IhJ$-Cauchy. Hence there is a set $M \in \FI$ such that $f|_M$ is $\J|_M$-Cauchy. Let $B_j = A_j \setminus M$. Then $B_j \in \I$ for each $j$ and $\bigcup\limits_j B_j \subseteq S \setminus M \in \I$. Clearly $A_j \triangle B_j = A_j \setminus B_j = A_j \cap M$ for every $j$.

Case I. If $A_j \cap M \in \J$ for all $j \in \mathbb{N}$ then $\APIJ$ holds. If $A_j \cap M \notin \J$ for at most one $j$, say for $j_0$, then redefining $B_{j_0} = A_{j_0}$, $B_j = A_j \sm M$
when $ j \neq j_0$ we again observe that the condition $\APIJ$ holds.

Case II. Finally, if possible, suppose that $A_j \cap M \notin \J$ for at least two $j$'s. Let $l$ and $m$ be two such indices. We shall show that this is not possible. We have $C_1 = A_l \cap M \notin \J$ and $C_2 = A_m \cap M \notin \J$ where $l \neq m$. Now any $E \in \FF(\J|_M)$ is of the form $E = C \cap M$ where $C \in \FJ$. Now $C \cap C_1 \neq \emps$ for otherwise we will have $C_1 \subseteq S \setminus C \in \J$ which is not the case. By similar reasoning $C \cap C_2 \neq \emps$. Then there is an $s \in E$ such that $s \in A_l$ i.e. $f(s) = x_l$ and a $t \in E$ such that $t \in A_m$ i.e. $f(t) = x_m$. Now choosing $0 < \ve_0 < \frac{d(x_l,x_m)}{3}$ we observe that for every $E \in \FF(\J|_M)$ there exist points $s, t \in E$ such that $d(f(s), f(t)) > \ve_0$ or in other words for this $\ve_0 > 0$, there does not exist any $D \in \J|_M$ such that $s,t \notin D \Rightarrow d(f(s), f(t)) < \ve_0$. But this contradicts the fact that $f|_M$ is $\J|_M$-Cauchy. Hence Case II can not arise and this completes the proof of the theorem.
\end{proof}

As special cases of Theorem \ref{THMIMPLIESAPIJ} we get \cite[Theorem 1]{DASGHOSAL} (taking $S = \NN$ and $\J = \Fin$) and also \cite[Theorem 6]{DASGHOSALCOMPLETE} (for $S=D$ and $\J=\ID$).

\begin{figure}[h]
\begin{center}
\includegraphics{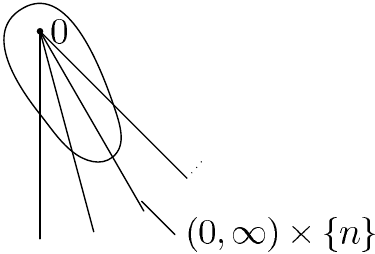}
\end{center}
\caption{The space from Example \ref{EXANIVEN} consists of countably many rays. The distance on each ray is stretched by the factor $\ve_n$. Typical neighborhood of $0$ looks like the set depicted in this figure.}\label{FIGNIVEN}
\end{figure}

Now we include an example of a uniform space $X$ such that there is a sequence in $X$ which is $\Id$-Cauchy but not $\Id^*$-Cauchy.
It is well known that the ideal $\Id$ is a P-ideal, i.e., the condition $\APIdFin$ holds; see, for example, \cite[Example 1.2.3(d)]{FARAHMEMOIRS}.
So the following example shows that Theorem \ref{THMAPIJIMPLIES} does not hold for arbitrary uniform spaces.
\begin{EXA}\label{EXANIVEN}
Let $X=\{0\}\cup\bigcup\limits_{n\in\NN} (0,\infty)\times\{n\}$. On this space we define the following family of pseudometrics. For every sequence $\ve=(\ve_n)$ of positive real numbers we define
\begin{align*}
d_\ve(0,(a,n))&=\ve_n a\\
d_\ve((a,n),(b,n))&=\ve_n \abs{b-a}\\
d_\ve((a,n),(b,m))&=\ve_n a+\ve_m b,&\text{for }n\ne m.
\end{align*}
This family of pseudometrics gives a uniformity $\Phi$ on $X$.

The space we obtain in this way is illustrated in Figure \ref{FIGNIVEN}.

We denote by $p_k$ the $k$-th prime. We decompose the set of positive integers as
$$C^{(k)}=\{n\in\NN; p_k\mid n+1, (\Dots p1{k-1},n+1)=1\}=: \{c^{(k)}_1<c^{(k)}_2<\dots <c^{(k)}_n<\dots\}.$$
Let $\Zobr f{\NN}X$ be defined as
$$f(c^{(k)}_n)=\left(\frac1{n+1},k\right).$$

We first show that this sequence is statistically convergent to $0$. Clearly, for any given neighborhood $U$ of $0$, the set $A(U):=\NN\sm\Invobr fU$ contains only finitely many elements from each $C^{(k)}$. This fact, together with the definition of $C^{(k)}$, implies that for any prime $p$ the set $\{n\in A(U); p\mid n+1\}$ is finite and, consequently, it has zero density. Now Corollary 1 of \cite{NIVENDENS} yields $d(A(U))=0$.

Now suppose that $f|_M$ is Cauchy sequence for some set $M$. Then the set $M$ intersects only finitely many of the sets $C^{(k)}$, $k\in\NN$. (Otherwise we would have a subsequence of the form $(a_k,n_k)$, where $n_k$ is an increasing sequence of positive integers, and by choosing $\ve_{n_k}=1/a_k$ we would get  $d_\ve((a_k,n_k),(a_l,n_l))>1$ for each $k$, $l$, contradicting the assumption that $f|_M$ is Cauchy.)
Therefore there exists $n$ such that $C^{(n)}\cap M = \emps$. As
$$d(C^{(n)})=\frac{(p_1-1)\dots(p_{n-1}-1)}{p_1p_2\dots p_n}$$
we have $\ol d(M)<1$ and $M\notin\FF(\Id)$.

Therefore the sequence $f$ is not $\I_d^*$-Cauchy.
\end{EXA}

The following example shows that Theorem \ref{THMIMPLIESAPIJ} is not true for arbitrary uniform spaces.
\begin{EXA}\label{EXAOMEGA1}
Let us recall that $\omone$ denotes the first uncountable ordinal with the usual ordering. Let $X$ be the topological space on the
set $\omone \cup \{\omone\}$ with the topology such that all points different from $\omone$ are isolated and the local base at the
point $\omega_1$ consists of all sets $U_\alpha=\{\beta\in X; \beta>\alpha\}$ for $\alpha<\omone$.

In \cite[Example 3.15]{MACSLEIK} it is shown that for any ideal $\I$ a sequence $\Zobr f{\NN}X$ is $\I$-convergent to $\omega_1$ if and only if there exists $M\in\FI$ such that $f(x)=\omega_1$ for each $x\in M$, i.e., $f|_M$ is a constant function. Since the remaining points of $X$ are isolated, the same is obviously true for all points $x\in X$. Therefore in this space the $\I$-convergence and $\I^*$-convergence of sequences are equivalent for any admissible ideal $\I$ on the set $\NN$.

We will show that the topology on $X$ can be obtained from a complete uniformity. Then we know that $\I$-Cauchy sequences in $X$ are precisely the $\I$-convergent sequences and $\I^*$-Cauchy sequences are precisely the $\I^*$-convergent sequences.
This means that taking any ideal $\I$ on $\NN$ which does not have the property $\APIFin$ gives us the desired counterexample.

It is relatively easy to see that if we take $B_\alpha=\{(x,x); x\in X\} \cup \{(x,y); x>\alpha, y>\alpha\}$ for $\alpha<\omone$, then $\{B_\alpha; \alpha<\omone\}$ is a base for a uniformity on the set $X$ and that the topology induced by this uniformity is precisely the topology described above.

Now let $\mc F$ be a system of closed sets which has finite intersection property and contains arbitrarily small sets. This means that for every $\alpha\in\omone$ there exists $F\in\mc F$ such that $F\cap\alpha=\emps$. Then we have $\omega_1\in F$ for each $F\in\mc F$. (If there is a closed set $F\in\mc F$ such that $\omega_1\notin F$, then we have $F\subseteq\alpha$ for some $\alpha<\omega_1$. If we take $F'$ such that $F'\cap\alpha=\emps$, we get that $F\cap F'=\emps$. This contradicts the finite intersection property.)

This shows that the uniformity described above is complete.
\end{EXA}

\begin{figure}[h]
\begin{center}
\includegraphics{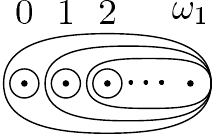}
\end{center}
\caption{The space $C(\omega_1)$ from Example \ref{EXAOMEGA1}}\label{FIGOMEGA1}
\end{figure}

\section{$\IhJ$-divergence}

The $\I^*$-divergent sequences were studied in \cite{DASGHOSAL}. They were introduced as a generalization of statistically divergent sequences from \cite{MACSALSUBSEQ}. In this section we define $\IhJ$-divergent functions and we show that we can obtain similar results as the results given in \cite{DASGHOSAL} for $\I^*$-divergent sequences

Let $(X,d)$ be a metric space. We introduce the following definitions.

\begin{DEF}
Let $\I$ be an ideal on a set $S$. A function $\Zobr fSX$ is said to be \emph{$\I$-divergent} if there is an $x\in X$ such that for any $r>0$, arbitrarily large,
$$\Invobr f{\ol B(x,r)} = \{k\in S; d(f(k),x)\le r\}\in\I$$
or, equivalently,
$$\Invobr f{X\sm \ol B(x,r)}= \{k\in S; d(f(k),x)>r\} \in \FI.$$
\end{DEF}

For $\I=\Fin$ and $S=\NN$ we get \cite[Definition 9]{DASGHOSAL} and for $S=\NN$ and any admissible ideal $\I$ on $\NN$ we get \cite[Definition 10]{DASGHOSAL}. For $\I=\Fin$ we simply say that $f$ is divergent.

Also note that if $\I$ is a proper ideal and $f$ is $\I$-divergent, then $f$ cannot be $\I$-convergent to any point of $X$. Actually if $A\subseteq S$, $A\notin\I$, then $f|_A$ also cannot $\I$-converge to any point of $X$.

\begin{DEF}
A function $\Zobr fSX$ is said to be \emph{$\I^*$-divergent} if there is an $M\in\FI$ such that $\Zobr{f|_M=g}MX$ is divergent (i.e., $\Fin$-divergent).
\end{DEF}

For $S=\NN$, this definition coincides with \cite[Definition 11]{DASGHOSAL}. We now extend this definition in the following manner.
\begin{DEF}
Let $\J$ and $\I$ be ideals on a set $S$. A function $\Zobr fSX$ is said to be \emph{$\IhJ$-divergent} if there is an $M\in\FI$ such that $\Zobr{f|_M=g}MX$ is $\J|_M$-divergent, i.e., there exists an $x\in X$ such that for any $r>0$, $\Invobr g{\ol B(x,r)}\in\J|_M$ (and so belongs to $\J$) or equivalently $\Invobr g{X\sm\ol B(x,r)}\in\FF(\J|_M)$, i.e., it is a set of the form $B\cap M$, where $B\in\FJ$.
\end{DEF}

If $\J=\Fin$, then the notion of $\IhJ$-divergence coincides with $\I^*$-divergence \cite{DASGHOSAL}.

\begin{LM}
If $\I$ and $\J$ are ideals on a set $S$ and $\Zobr fSX$ is a function such that $f$ is $\J$-divergent then $f$ is also $\IhJ$-divergent.
\end{LM}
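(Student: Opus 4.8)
The plan is to proceed exactly as in the proof of Lemma \ref{LMKCAUCHY}, by taking the witnessing set $M$ in the definition of $\IhJ$-divergence to be all of $S$. First I would note that $S\in\FI$: since $\I$ is a nonempty system hereditary with respect to inclusion, we have $\emps\in\I$, and hence $S=S\sm\emps$ belongs to the dual filter $\FI$. So $M=S$ is an admissible choice of a set in $\FI$.

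Next I would observe that for $M=S$ we have $f|_M=f$ and $\J|_M=\{A\cap S; A\in\J\}=\J$. Therefore the hypothesis that $f$ is $\J$-divergent says precisely that the restriction $f|_M$ is $\J|_M$-divergent: there is an $x\in X$ such that for every $r>0$ the set $\Invobr f{\ol B(x,r)}$ lies in $\J=\J|_M$.

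Finally, by the very definition of $\IhJ$-divergence, exhibiting a single set $M\in\FI$ (here $M=S$) for which $f|_M$ is $\J|_M$-divergent is exactly what is required; hence $f$ is $\IhJ$-divergent. I do not expect any genuine obstacle here — the statement is an immediate unwinding of the definitions, entirely parallel to Lemma \ref{LMKCAUCHY} — so the proof will be a short two- or three-sentence argument.
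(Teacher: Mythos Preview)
Your proposal is correct and matches the paper's own treatment exactly: the paper simply states that the proof follows directly from the definitions, which is precisely the $M=S$ argument you describe (the same trick used in Lemma \ref{LMKCAUCHY}).
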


The proof follows directly from the definitions.

For a given metric space $(X,d)$ we define a topological space $X^*$ on the set $X^*=X\cup\{\infty\}$, where $\infty\notin X$, in the following way:
We define a topology on $X\cup\{\infty\}$ by choosing the sets of the form $\{\infty\}\cup X\sm \ol B(x_0,r)$ for some fixed $x_0\in X$ to be a local base at the point $\infty$ and taking open neighborhoods of $x\in X$ from the original topology as a local base at $x$.

It is relatively easy to see that the resulting topology does not depend on the choice of $x_0$
and  that $\infty$ is an isolated point in $X^*$ if and only if the metric $d$ is bounded on $X$.
It is also useful to notice that $X\cup\{\infty\}$ is first countable.

From the definitions we immediately get the following result:
\begin{LM}
A function $\Zobr fSX$ is $\I$-divergent if and only it is $\I$-convergent to $\infty$ when considered as a function from $S$ to the space $X^*=X\cup\{\infty\}$.

A function $\Zobr fSX$ is $\IhJ$-divergent if and only it is $\IhJ$-convergent to $\infty$ when considered as a function from $S$ to the space $X^*=X\cup\{\infty\}$.
\end{LM}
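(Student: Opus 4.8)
The plan is to simply unravel the definitions; the only point that needs a moment's thought is the role of the base point $x_0$ in the topology of $X^*$, and this has already been settled in the paragraph preceding the lemma.

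First I would prove the statement about $\I$-divergence. Recall that $\Zobr fSX$ is $\I$-divergent iff there is an $x\in X$ with $\Invobr f{X\sm\ol B(x,r)}\in\FI$ for every $r>0$, whereas $f$ (viewed as a map into $X^*$) is $\I$-convergent to $\infty$ iff $\Invobr fU\in\FI$ for every neighbourhood $U$ of $\infty$. Since the sets $\{\infty\}\cup(X\sm\ol B(x_0,r))$, $r>0$, form a local base at $\infty$, and since — as already observed — the topology of $X^*$ does not depend on the chosen base point, we may use any $x\in X$ in place of $x_0$ and it suffices to test $\I$-convergence on these basic sets. Because $f$ takes values in $X$ we have $\Invobr f{\{\infty\}\cup(X\sm\ol B(x,r))}=\Invobr f{X\sm\ol B(x,r)}$, so the condition ``$\Invobr fU\in\FI$ for all basic neighbourhoods $U$ of $\infty$ formed from the point $x$'' is literally the $\I$-divergence condition with witness $x$. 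Hence: if $f$ is $\I$-divergent with witness $x$, use the local base at $\infty$ formed from $x$ to conclude $\I$-convergence to $\infty$; conversely, if $f$ is $\I$-convergent to $\infty$, fix any $x_0\in X$, use the local base formed from $x_0$, and read off that $f$ is $\I$-divergent with witness $x_0$.

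For the second statement I would just apply the first part to the restriction $f|_M$. By definition $f$ is $\IhJ$-divergent iff there is an $M\in\FI$ such that $f|_M$ is $\J|_M$-divergent, and (using the equivalent form of $\IhJ$-convergence recalled right after its definition) $f$ is $\IhJ$-convergent to $\infty$ in $X^*$ iff there is an $M\in\FI$ such that $f|_M$ is $\J|_M$-convergent to $\infty$. Applying the already proved first part with $S$, $\I$, $f$ replaced by $M$, $\J|_M$, $f|_M$ shows that, for each fixed $M$, these two inner conditions are equivalent, and therefore the two outer conditions are equivalent as well. I do not expect any genuine obstacle here: everything is a direct translation through the construction of $X^*$, and the only mildly delicate item — independence of the topology of $X^*$ from the base point — is already available; one just has to keep track of the fact that the existential ``there is an $x\in X$'' in the definition of divergence corresponds precisely to the freedom of choosing the base point for the local base at $\infty$.
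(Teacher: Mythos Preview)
Your argument is correct and is exactly the unraveling of definitions the paper has in mind; the paper itself gives no proof beyond the remark that the lemma follows immediately from the definitions. Your only addition is making explicit the role of the base point $x_0$ in the local base at $\infty$, which is harmless and already covered by the paragraph preceding the lemma.
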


Using the above lemma we can obtain some results on $\IhJ$-divergence directly from the corresponding results about $\IhJ$-convergence.

We know that if $\J\subseteq\I$, then $\IhJ$-convergence implies $\I$-convergence, see \cite[Proposition 3.7]{MACSLEIK}. For our purposes the following form of converse of this result will be useful.
\begin{LM}\label{LMDIVER1}
Let $\I$, $\J$ be ideals on a set $S$. Let $X$ be a Hausdorff topological space and $x\in X$ be a non-isolated point. Suppose that there exists at least one function $\Zobr gSX$ such that $x\notin\Obr gS$ and $g$ is $\J$-convergent to $x$.

If every function $\Zobr fSX$ such that $x\notin\Obr fS$ which is $\IhJ$-convergent to $x$ is also $\I$-convergent to $x$, then $\J\subseteq\I$.
\end{LM}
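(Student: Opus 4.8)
The plan is to argue directly: assuming the stated implication, take an arbitrary $K\in\J$ and deduce $K\in\I$; since $K$ is arbitrary, this gives $\J\subseteq\I$. The idea is to \emph{perturb} the given function $g$ on the set $K$ so that the perturbed function still $\J$-converges (hence $\IhJ$-converges) to $x$ and still avoids the value $x$, while the hypothesis, applied to it, forces $K$ to lie in $\I$.

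Concretely, I would fix some $y\in\Obr gS$ — note $y\ne x$, since $x\notin\Obr gS$; if $S=\emptyset$ then $\J=\{\emps\}\subseteq\I$ and there is nothing to prove — and for a given $K\in\J$ define $\Zobr fSX$ by $f(s)=y$ for $s\in K$ and $f(s)=g(s)$ for $s\notin K$. Then $\Obr fS\subseteq\Obr gS$, so $x\notin\Obr fS$. The first step is to check that $f$ is $\J$-convergent to $x$: for a neighborhood $U$ of $x$ we have $\Invobr fU\supseteq\Invobr gU\cap(S\sm K)$ if $y\in U$ and $\Invobr fU=\Invobr gU\cap(S\sm K)$ if $y\notin U$; in either case $\Invobr fU\in\FJ$, because $\Invobr gU\in\FJ$ (as $g$ is $\J$-convergent to $x$) and $S\sm K\in\FJ$ (as $K\in\J$). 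Taking $M=S$ in the definition of $\IhJ$-convergence, $f$ is therefore $\IhJ$-convergent to $x$.

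Now the hypothesis applies to $f$ and yields that $f$ is $\I$-convergent to $x$. Since $X$ is Hausdorff, there is a neighborhood $U_0$ of $x$ with $y\notin U_0$, and for this $U_0$ we get $\Invobr f{U_0}=\Invobr g{U_0}\cap(S\sm K)\subseteq S\sm K$. From $\I$-convergence, $\Invobr f{U_0}\in\FI$, hence $S\sm\Invobr f{U_0}\in\I$; and $K\subseteq S\sm\Invobr f{U_0}$, so $K\in\I$ by hereditariness of $\I$.

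I do not expect a genuine obstacle; the one point requiring a little care is verifying that the perturbed $f$ retains $\J$-convergence to $x$ while its value on $K$ becomes invisible to small enough neighborhoods of $x$ — this is exactly where the assumptions $K\in\J$ and \uv{$g$ is $\J$-convergent to $x$} are used together. The non-isolatedness of $x$ together with the existence of $g$ only serve to make the hypothesis non-vacuous; beyond Hausdorff (in fact $T_1$) separation, nothing further about the topology of $X$ is needed.
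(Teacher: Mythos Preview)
Your proof is correct and follows essentially the same approach as the paper: perturb $g$ on a set from $\J$ by setting the values there to a fixed $y\ne x$, observe that the perturbed function is still $\J$-convergent (hence $\IhJ$-convergent) to $x$, and then use a neighborhood separating $x$ from $y$ to extract information about $\I$. The paper argues by contraposition (take $A\in\J\sm\I$ and produce a counterexample to the hypothesis), while you argue directly (take arbitrary $K\in\J$ and deduce $K\in\I$), but the underlying construction and computations are the same.
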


\begin{proof}
Suppose $\J\nsubseteq\I$, i.e., there exists a set $A\in\J\sm\I$. Let $y\ne x$. Let us define
$$f(t)=
  \begin{cases}
 g(t) & \text{if }t\notin A, \\
    y & \text{if }t\in A.
  \end{cases}
$$
Clearly, $x\notin\Obr fS$.

For any neighborhood $U$ of $x$ we have $\Invobr fU \supseteq \Invobr gU\sm A$.
Since $g$ is $\J$-convergent to $x$, this implies $\Invobr fU\in\FJ$, and thus $f$ is also $\J$-convergent to $x$. Consequently, $f$ is also $\IhJ$-convergent to $x$.

If we take a neighborhood $U$ of $x$ such that $y\notin U$, then we have $\Invobr fU\subseteq S\sm A\notin\FI$. This shows that $f$ is not $\I$-convergent to $x$.
\end{proof}

\begin{THM}\label{THMDIV1}
\noindent
\begin{enumerate}
\enu
  \item Let $\I$ and $\J$ be two ideals on a set $S$ and $\J\subseteq\I$. If $\Zobr fSX$ is $\IhJ$-divergent then it is also $\I$-divergent.
  \item Let $(X,d)$ be a metric space such that $d$ is not bounded. Let $\I$ and $\J$ be ideals on a set $S$  such that there is at least one $\J$-divergent function from $S$ to $X$. If $\IhJ$-divergence implies $\I$-divergence, then $\J\subseteq\I$.
\end{enumerate}
\end{THM}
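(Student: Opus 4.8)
The plan is to reduce both parts to statements about $\IhJ$-convergence in the auxiliary space $X^*=X\cup\{\infty\}$. Recall from the lemma above relating divergence to convergence to $\infty$ that a function $\Zobr fSX$ is $\I$-divergent (respectively $\IhJ$-divergent) exactly when it is $\I$-convergent (respectively $\IhJ$-convergent) to $\infty$, once $f$ is regarded as a map into $X^*$; and observe that the maps $\Zobr g{S}{X^*}$ with $\infty\notin\Obr gS$ are precisely the maps $\Zobr gSX$. So everything we need is the dictionary between divergence of maps into $X$ and convergence to $\infty$ of maps into $X^*$.

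For part (i), suppose $\Zobr fSX$ is $\IhJ$-divergent, i.e.\ $f$ is $\IhJ$-convergent to $\infty$ in $X^*$. Since $\J\subseteq\I$, Corollary~\ref{CORSUBSETIMPLIES} (equivalently \cite[Proposition 3.7]{MACSLEIK}), applied to the space $X^*$ and the point $\infty$, shows that $f$ is $\I$-convergent to $\infty$ in $X^*$, which by the dictionary means that $f$ is $\I$-divergent.

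For part (ii), I would invoke Lemma~\ref{LMDIVER1} with the Hausdorff space $X^*$ and the point $x=\infty$. One first checks the hypotheses: $X^*$ is Hausdorff (distinct points of $X$ are separated by the metric, while $x\in X$ is separated from $\infty$ by a ball $B(x,\ve)$ chosen small enough to lie inside some $\ol B(x_0,R)$ with $R>d(x,x_0)$, together with the complementary neighborhood $\{\infty\}\cup(X\sm\ol B(x_0,R))$ of $\infty$), and $\infty$ is non-isolated precisely because $d$ is unbounded, as was noted when $X^*$ was introduced. The assumed existence of a $\J$-divergent map $\Zobr gSX$ translates, via the dictionary, into the existence of a map $\Zobr g{S}{X^*}$ with $\infty\notin\Obr gS$ that is $\J$-convergent to $\infty$, which is exactly the side condition required by Lemma~\ref{LMDIVER1}. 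Finally, the hypothesis that $\IhJ$-divergence implies $\I$-divergence says precisely that every $\Zobr fS{X^*}$ with $\infty\notin\Obr fS$ which is $\IhJ$-convergent to $\infty$ is also $\I$-convergent to $\infty$. Hence Lemma~\ref{LMDIVER1} applies and gives $\J\subseteq\I$.

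The proof is essentially bookkeeping through the $X^*$-construction, so I do not anticipate a genuine obstacle; the only points needing a line of verification are that $X^*$ is Hausdorff and that $\infty$ fails to be isolated exactly when $d$ is unbounded, both of which were already implicitly recorded in the discussion of $X^*$.
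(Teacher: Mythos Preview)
Your proposal is correct and matches the paper's approach: the paper's own proof is a two-line reference, invoking \cite[Proposition~3.7]{MACSLEIK} for part~(i) and Lemma~\ref{LMDIVER1} for part~(ii), with the translation to $X^*$ and the point $\infty$ supplied implicitly by the lemma immediately preceding the theorem. Your write-up simply makes that translation explicit and checks the side hypotheses of Lemma~\ref{LMDIVER1} (Hausdorffness of $X^*$, non-isolation of $\infty$), which the paper leaves to the reader.
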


\begin{proof}
The first part the proof of \cite[Proposition 3.7]{MACSLEIK}. The second part follows from Lemma \ref{LMDIVER1}.
\end{proof}

As a special case of the above theorem we get \cite[Theorem 2]{DASGHOSAL}.

In general $\I$-divergence does not imply $\IhJ$-divergence even if $\J\subseteq\I$ as can be seen from \cite[Example 2]{DASGHOSAL}, where a function $\Zobr f{\NN}X$ (where $X=\RR$ with the usual metric) is constructed which is $\I$-divergent but not $\IhJ$-divergent, where $\I$ is a particular admissible ideal on $\NN$ (see \cite{DASGHOSAL} for details) and $\J=\Fin$.

In the next two results we show that the condition $\APIJ$ is both necessary and sufficient for the implication
$$\text{$\I$-divergence $\Ra$ $\IhJ$-divergence}$$
under some general conditions.

From \cite[Theorem 3.11]{MACSLEIK}  we get
\begin{THM}\label{THMDIV2}
Let $\I$ and $\J$ be two ideals on a set $S$ and $\I$ satisfies  the additive property with respect to $\J$, i.e. $\APIJ$ holds. Let $X$ be a metric space. Then for any function $\Zobr fSX$, $\I$-divergence implies $\IhJ$-divergence.
\end{THM}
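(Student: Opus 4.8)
The plan is to reduce the statement to \cite[Theorem 3.11]{MACSLEIK} by passing to the one-point extension $X^*=X\cup\{\infty\}$ introduced above. Recall that $X^*$ is a (first countable) topological space and that, by the lemma characterizing divergence through convergence to $\infty$, a function $\Zobr fSX$ is $\I$-divergent if and only if it is $\I$-convergent to $\infty$ when viewed as a map into $X^*$, and likewise $f$ is $\IhJ$-divergent if and only if it is $\IhJ$-convergent to $\infty$ in $X^*$.

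So first I would take an arbitrary $\I$-divergent function $\Zobr fSX$ and regard it as a function with values in $X^*$; by the above lemma it is then $\I$-convergent to $\infty$. Next, since the condition $\APIJ$ is assumed, \cite[Theorem 3.11]{MACSLEIK} gives that $\I$-convergence implies $\IhJ$-convergence, so $f$ is $\IhJ$-convergent to $\infty$ in $X^*$. Applying the characterization lemma once more, this time in the reverse direction, we conclude that $f$ is $\IhJ$-divergent. There is no real obstacle here: the only thing to notice is that \cite[Theorem 3.11]{MACSLEIK} applies to the space $X^*$, which it does, since the sufficiency of $\APIJ$ for the implication $\I$-convergence $\Ra$ $\IhJ$-convergence requires no countability assumption on the target space.

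For a reader who prefers a self-contained argument, one can instead mimic the second proof of Theorem \ref{THMAPIJIMPLIES}. If $x\in X$ witnesses the $\I$-divergence of $f$, fix an increasing sequence $r_n\to\infty$, put $G_n=\Invobr f{\ol B(x,r_n)}\in\I$ and $A_1=G_1$, $A_{n+1}=G_{n+1}\sm G_n$, apply $\APIJ$ to obtain $B_n\in\I$ with $B=\bigcup_n B_n\in\I$ and $A_n\triangle B_n\in\J$, and let $M=S\sm B$. A computation parallel to the one in that proof then shows that for every $r>0$ the set $\Invobr f{\ol B(x,r)}\cap M$ belongs to $\J|_M$, i.e. $f|_M$ is $\J|_M$-divergent with the same witness $x$, so $f$ is $\IhJ$-divergent.
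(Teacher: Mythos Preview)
Your argument is correct and matches the paper's approach exactly: the paper derives Theorem \ref{THMDIV2} directly from \cite[Theorem 3.11]{MACSLEIK}, implicitly via the lemma identifying $\I$- and $\IhJ$-divergence with $\I$- and $\IhJ$-convergence to $\infty$ in $X^*$. Your remark that the sufficiency direction of \cite[Theorem 3.11]{MACSLEIK} needs no countability hypothesis on the target space is the only point worth making explicit, and your optional self-contained alternative is a welcome bonus.
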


To get the partial converse of the above result we again prove some auxiliary result about $\IhJ$-convergence:
\begin{LM}\label{LMDIVER2}
Let $X$ be a first countable Hausdorff topological space, let $\I$, $\J$ be ideals on $S$.
Let $x\in X$ be a point such that there exists a function $\Zobr gSX$ such that $x\notin\Obr gS$ and $g$ is
$\I$-convergent to $x$.

Suppose that every function $\Zobr fSX$ such that $x\notin\Obr fS$, which is $\I$-convergent to $x$, is also $\IhJ$-convergent to $x$. Then the ideals $\I$, $\J$ fulfill the condition $\APIJ$.
\end{LM}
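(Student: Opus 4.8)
The plan is to prove the lemma by a direct construction that mirrors the second (self-contained) proof of Theorem \ref{THMIMPLIESAPIJ}: given an arbitrary sequence of pairwise disjoint sets in $\I$, I would build a test function $f$ that is $\I$-convergent to $x$, apply the hypothesis to obtain $\IhJ$-convergence, and read off from the resulting set $M\in\FI$ the sets required by $\APIJ$. The one genuinely new point is that here the test function must satisfy $x\notin\Obr fS$, so the constant value at the limit used in Theorem \ref{THMIMPLIESAPIJ} is replaced by the given function $g$.

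First I would dispose of the case $\I=\powerset S$, in which $\APIJ$ holds by taking $B_n=A_n$; so assume $\I$ is proper. Then, for every neighborhood $U$ of $x$, the set $\Invobr gU$ is nonempty (it lies in $\FI$), and since $\Obr gS\subseteq X\sm\{x\}$ this forces $U\cap(X\sm\{x\})\ne\emps$; hence $x$ is non-isolated. Using first countability, fix a decreasing neighborhood base $U_1\supseteq U_2\supseteq\cdots$ at $x$ and pick $y_n\in U_n\sm\{x\}$, obtaining a sequence $(y_n)$ in $X\sm\{x\}$ with $y_n\to x$.

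Then I would take an arbitrary sequence $(A_n)_{n\in\NN}$ of pairwise disjoint sets from $\I$ and define $\Zobr fSX$ by $f(s)=y_n$ if $s\in A_n$ and $f(s)=g(s)$ if $s\notin\bigcup_n A_n$. Then $x\notin\Obr fS$, and $f$ is $\I$-convergent to $x$ because for each neighborhood $U$ of $x$
$$S\sm\Invobr fU\subseteq (S\sm\Invobr gU)\cup\bigcup\{A_n; y_n\notin U\},$$
and the last union is finite (as $y_n\to x$), so the right-hand side belongs to $\I$. By the hypothesis of the lemma, $f$ is then $\IhJ$-convergent to $x$, i.e. there is $M\in\FI$ such that $f|_M$ is $\J|_M$-convergent to $x$. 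Set $B_n=A_n\sm M$; then $B_n\subseteq A_n\in\I$, $\bigcup_n B_n\subseteq S\sm M\in\I$, and $A_n\triangle B_n=A_n\cap M$, so it only remains to show $A_n\cap M\in\J$ for each $n$. Fixing $n$ and, using that $X$ is $T_1$, picking a neighborhood $U$ of $x$ with $y_n\notin U$, the $\J|_M$-convergence gives $\Invobr fU\cap M=C\cap M$ for some $C\in\FJ$; but $f$ is constantly $y_n\notin U$ on $A_n\cap M$, so $A_n\cap M$ is disjoint from $\Invobr fU\cap M=C\cap M$, whence $A_n\cap M\subseteq S\sm C\in\J$. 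Therefore $A_n\triangle B_n\in\J$ for every $n$ and $\bigcup_n B_n\in\I$, so $\APIJ$ holds.

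I do not expect a real obstacle: once the construction is set up, the verification is routine. The only slightly delicate step is producing the sequence $(y_n)\to x$ lying entirely in $X\sm\{x\}$, which is exactly where first countability and the non-isolatedness of $x$ (a consequence of the existence of $g$) are used. Note that, unlike in Theorem \ref{THMIMPLIESAPIJ}, no case distinction on how many $A_n\cap M$ fail to lie in $\J$ is needed, because we are working with $\J|_M$-\emph{convergence}, which fixes the limit as $x$, and every $y_n$ differs from $x$. One could instead try to invoke \cite[Theorem 3.12, Remark 3.13]{MACSLEIK}, but that result controls \emph{all} functions $\I$-convergent to $x$, whereas our hypothesis only controls those whose range avoids $x$, so the direct argument above is what is required.
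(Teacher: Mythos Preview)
Your proposal is correct and follows essentially the same approach as the paper's proof: construct the test function $f$ by setting $f(s)=y_n$ on $A_n$ and $f(s)=g(s)$ off $\bigcup_n A_n$, verify $\I$-convergence, apply the hypothesis to obtain $M\in\FI$, and take $B_n=A_n\sm M$. Your version is in fact slightly more careful than the paper's, since you explicitly handle the degenerate case $\I=\powerset S$ and justify why $x$ must be non-isolated from the existence of $g$, whereas the paper simply asserts this.
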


\begin{proof}
Since $X$ is first countable and $x$ is not an isolated point, there exists a sequence $x_n$ of points from $X\sm\{x\}$, which is convergent to $x$.

Let $\{A_n, n\in\NN\}$, be a system of mutually disjoint sets from $\I$. Let us define a function $\Zobr fSX$ as
$$f(s)=
  \begin{cases}
    x_n; & s\in A_n, \\
    g(s); & s\notin \bigcup_{n\in\NN}A_n,.
  \end{cases}
$$
For every neighborhood $U$ of $x$ there exists an $n\in\NN$ such $\Invobr f{X\sm U}\subseteq \Invobr g{X\sm U} \cup \bigcup\limits_{k=1}^n A_k$, which implies that $\Invobr f{X\sm U}\in\I$. This shows that $f$ is $\I$-convergent to $x$.

By the assumptions of the lemma this implies that $f$ is also $\IhJ$-convergent to $x$. I.e., there is a set $M\in\FI$ such that $f|_M$ is $\J|_M$-convergent to $x$. This means that for every neighborhood $U$ of $x$ we have
$$\Invobr f{X\sm U}\cap M = A\cap M$$
for some $A\in\J$. In particular, this implies $\Invobr f{X\sm U}\cap M\in\J$.

Let us define $B_i=A_i\sm M$. We have $\bigcup\limits_{i\in\NN} B_i \subseteq S\sm M \in\I$.

At the same time, for the set $B_i\triangle A_i=A_i\cap M$ we have
$$A_i\cap M \subseteq \Invobr f{X\sm U} \cap M$$
for any neighborhood $U$ of $x$ such that $x_i\notin U$.
Consequently $B_i\triangle A_i \in \J$.
\end{proof}

Note that Lemma \ref{LMDIVER2} is, to some extent, similar to \cite[Theorem 3.12, Remark 3.13]{MACSLEIK}. The difference is that here we are working only with maps which do not attain value $x$.

From the above lemma we get:
\begin{THM}\label{THMDIV3}
Let $(X,d)$ be a metric space such that $d$ is not bounded. Let $\I$ and $\J$ be two ideals on $S$ such that $\J\subseteq\I$ and there exists at least one $\J$-divergent function from $S$ to $X$. If for every function $\Zobr gSX$, $\I$-divergence of $g$ implies $\IhJ$-divergence of $g$ then $\APIJ$ holds.
\end{THM}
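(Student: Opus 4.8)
The plan is to deduce this from Lemma \ref{LMDIVER2} by passing to the one-point extension $X^{*}=X\cup\{\infty\}$ described above. Since the metric $d$ is unbounded, the point $\infty$ is not isolated in $X^{*}$; we have already noted that $X^{*}$ is first countable, and it is clearly Hausdorff (a point $x\in X$ and $\infty$ can be separated by a small ball around $x$ and the complement of a large ball around $x$). By the lemma identifying $\I$-divergence with $\I$-convergence to $\infty$ (and $\IhJ$-divergence with $\IhJ$-convergence to $\infty$), a map $\Zobr hSX$ is $\I$-divergent exactly when, viewed as a map into $X^{*}$, it is $\I$-convergent to $\infty$, and likewise for $\IhJ$. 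Moreover every such map automatically satisfies $\infty\notin\Obr hS$, because its values lie in $X$.

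First I would verify that the hypothesis of Lemma \ref{LMDIVER2} holds at the non-isolated point $x=\infty$. By assumption there is a $\J$-divergent function $\Zobr gSX$, and as a map into $X^{*}$ this $g$ is $\J$-convergent to $\infty$. Since $\J\subseteq\I$ we have $\FJ\subseteq\FI$, hence $\Invobr gU\in\FI$ for every neighborhood $U$ of $\infty$, so $g$ is also $\I$-convergent to $\infty$; and $\infty\notin\Obr gS$. Thus $\infty$ admits a map into $X^{*}$ avoiding it that $\I$-converges to it, exactly as the lemma requires. (This is where the assumption $\J\subseteq\I$ is used.)

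Next I would translate the main hypothesis. Assume that for every $\Zobr fSX$, $\I$-divergence of $f$ implies $\IhJ$-divergence of $f$. Via the dictionary above this says precisely that every map from $S$ into $X^{*}$ which avoids the value $\infty$ and is $\I$-convergent to $\infty$ is also $\IhJ$-convergent to $\infty$ --- which is exactly the hypothesis of Lemma \ref{LMDIVER2} with $X$ replaced by $X^{*}$ and $x=\infty$. The lemma then yields that $\I$ and $\J$ satisfy $\APIJ$, completing the proof.

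The only delicate points are bookkeeping: checking that unboundedness of $d$ makes $\infty$ non-isolated (already recorded in the discussion of $X^{*}$) and keeping the side condition \uv{the map avoids the value $x$} aligned across the translation, which here is automatic since any map into $X$ misses $\infty$ in $X^{*}$. I do not anticipate a real obstacle, as the substance has been isolated in Lemma \ref{LMDIVER2}.
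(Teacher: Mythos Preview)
Your proposal is correct and follows exactly the approach the paper intends: the paper simply states ``From the above lemma we get'' before Theorem \ref{THMDIV3}, and you have spelled out precisely that deduction --- passing to $X^*$, using that $\infty$ is non-isolated when $d$ is unbounded, invoking the divergence/convergence dictionary, and applying Lemma \ref{LMDIVER2}. Your use of $\J\subseteq\I$ to promote the $\J$-divergent witness to an $\I$-convergent (to $\infty$) witness is the right way to match the hypothesis of Lemma \ref{LMDIVER2}.
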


Note that Theorem 3 and Theorem 4 of \cite{DASGHOSAL} are just special cases of Theorem \ref{THMDIV2} and Theorem \ref{THMDIV3} taking $S=\NN$, $\J=\Fin$ and $\I$ any admissible ideal on $\NN$.

\noindent{\textbf{Acknowledgement.}}
The first author is thankful to INSA (India) and SAS (Slovakia) for arranging a visit under bilateral exchange program when this work was done.
Part of this research was carried out while M. Sleziak was a member of the research team 1/0330/13 supported in part by the grant agency VEGA (Slovakia).

\bibliographystyle{amsplain}
\bibliography{martin}

%
%
%
%
%
%
%
%
%
%
%
%
%
%

\end{document}